\numberwithin{equation}{section}
\theoremstyle{plain}
\newtheorem*{theorem*}{Theorem}
\newtheorem{theorem}{Theorem}[section]
\newtheorem{proposition}[theorem]{Proposition}
\newtheorem{lemma}[theorem]{Lemma}
\newtheorem{observation}[theorem]{Observation}
\newtheorem{corollary}[theorem]{Corollary}
\newtheorem{property}[theorem]{Property}
\newtheorem*{claim 0}{Claim 0}
\newtheorem*{claim 1}{Claim 1}
\newtheorem*{claim 2}{Claim 2}
\newtheorem*{claim 3}{Claim 3}
\newtheorem{definition}[theorem]{Definition}
\theoremstyle{definition}
\newtheorem{assumption}[theorem]{Assumption}
\newcommand{\RR}{\mathbb{R}}
\newcommand{\NN}{\mathbb{N}}
\newcommand{\SC}{S^{1}}
\newcommand{\G}{\mathcal{G}}
\newcommand{\GG}{\widetilde{\mathcal{G}}}
\newcommand{\F}{\mathcal{F}}
\newcommand{\FF}{\widetilde{\mathcal{F}}}
\newcommand{\A}{\mathcal{A}}
\newcommand{\R}{\mathcal{R}}
\newcommand{\M}{\mathcal{M}}
\newcommand{\CCC}{\mathcal{C}}
\newcommand{\MM}{\widetilde{\mathcal{M}}}
\newcommand{\U}{\mathcal{U}}
\newcommand{\V}{\mathcal{V}}
\newcommand{\TPhi}{\widetilde{\Phi}}
\newcommand{\wwp}{\widetilde \Phi}
\newcommand{\zein}{[0,\infty)}
\newcommand{\wT}{\widetilde T}
\begin{document}
	
	\title{Leafwise Quasigeodesic Foliations in Dimension Three And the Funnel Property }
	
	\begin{center}
		
		\author{ANINDYA CHANDA AND S\'ERGIO FENLEY}

	\end{center}
	
	\date{}
	\subjclass[2020]{
		Primary: 37C10, 37D20, 57R30, 53C23.
		Secondary: 37D40, 37D45, 37C15, 37D10.}
	
	\keywords{Quasigeodesics, subfoliations, Anosov flows.}
	
	\address{Florida State University, Tallahassee, FL 32304, United States}
	\email{ac17t@my.fsu.edu/achanda@math.fsu.edu}
	\address{Florida State University, Tallahassee, FL 32304, United States}
	\email{fenley@math.fsu.edu }

	\begin{abstract} 
		We construct one dimensional foliations which are
		subfoliations of two dimensional foliations in
		$3$-manifolds.
		The subfoliation is by quasigeodesics in each
		two dimensional leaf, but it is not funnel: 
		not all quasigeodesics share  a common ideal point
		in most leaves.
	\end{abstract}
	\maketitle
	
	
	\section{Introduction}

	The goal of this article is to analyze whether certain geometric
	conditions imply that a one dimensional foliation
	in a $3$-manifold is the foliation by flow lines of a topological
	Anosov flow. We do this analysis for one dimensional foliations
	whose leaves lie inside leaves of two dimensional foliations
	and whose leaves are quasigeodesics in these two dimensional
	foliations. In other words the goal of this article is to analyse
	whether some strictly 
	geometric behavior implies strong dynamical systems behavior
	in this setting.
	This has important connections with partial hyperbolicity
	in dimension $3$.

	A foliation $\G$ subfoliates a foliation $\F$ if each
	leaf of $\F$ has a foliation made up of leaves of $\G$.
	We call $\G$ the subfoliation and $\F$ the super foliation.
	This situation is very common, for example if $\F_1$ and
	$\F_2$ are two foliations which are transverse to each
	other everywhere,
	then their intersection forms a subfoliation of each
	of them.
	This article aims to study geometric properties of 
	leaves of subfoliations inside the leaves of the 
	super foliation.

	One very common and extremely important example is the
	following: let $\Phi$ be an Anosov flow and let
	$\F^{ws}, \F^{wu}$ be the weak stable and weak unstable foliations of $\Phi$
	\cite{Ano63, KH95}. Then $\F^{ws}, \F^{wu}$
	are transverse to each other $-$ the intersection is the
	foliation by flow lines of $\Phi$ which is a subfoliation
	of each of them. 
	This example has connections with geometry or large scale
	geometry: the leaves of $\F^{ws}, \F^{wu}$ are Gromov hyperbolic.
	In rough terms this means that they are negatively curved.
	The subfoliation by flow lines in, say $\F^{ws}$,
	satisfies an additional strong geometric property: in each leaf of
	$\F^{ws}$ the flow lines are quasigeodesics. This means that
	when lifted to the universal cover of the leaves,
	the flow lines are uniformly
	efficient up to a bounded multiplicative distortion in 
	measuring length in the weak stable leaves.
	In other words the flow lines are quasi-isometrically embedded in
	these weak stable leaves. The quasigeodesic property has
	many important consequences, for example the flow lines
	are within bounded distance from length minimizing geodesics
	when lifted to the universal cover of their respective weak stable 
	leaves
	(\cite{Thu82, Thu97, Gro87}). Hence the flow lines have well defined
	distinct ideal points in the Gromov boundary of the weak stable 
	leaves in both
	directions. 
	These properties and others are very strong and useful in
	many contexts.
	Obviously this also works for the flow subfoliation of the weak 
	unstable foliation.
	
	A (one dimensional) subfoliation made of quasigeodesics in the leaves of
	a super foliation by Gromov hyperbolic
	leaves is called a {\em {leafwise quasigeodesic foliation}}.
	
	The Anosov case has an additional geometric property: in (say)
	a weak stable leaf all flow lines are forward asymptotic, this is a
	defining property of the weak stable foliation. In particular
	all flow lines in a given weak stable leaf have the same
	forward ideal point in the ideal boundary of the weak stable
	leaf (when lifted to the universal cover). 
	
	When all leaves of a leafwise quasigeodesic 
	subfoliation in a leaf of the super foliation have a common 
	ideal point we call that leaf a {\em {funnel leaf}}. If
	all leaves of the super foliation are funnel leaves then
	the leafwise quasigeodesic foliation is said to have
	the {\em {funnel property}}.
	
	The motivation for this article is the following question: is the
	funnel property an additional property or is it a consequence
	of the leafwise quasigeodesic property?
	The importance of this is the following: in dimension $3$ we
	have a much stronger connection between some of these properties
	as follows. Suppose that $\G$ is a leafwise quasigeodesic
	foliation (which is a one dimensional subfoliation
	of a two dimensional foliation) which has the funnel property. 
	The ambient manifold is $3$-dimensional.
	Suppose that the foliation $\G$ is orientable, or in other words
	it is the foliation of a non singular flow.
	Then one can prove
	(we refer to \cite{BFP20} for definitions of the terms used here and
	for detailed proofs) 
	that the flow in question is
	expansive. 
	This implies that the flow is a topological Anosov flow
	(\cite[Theorem 15.]{IM90}, \cite[Lemma 7]{Pat93}).
	If the flow is transitive (the union of periodic
	orbits is dense) then the topological Anosov flow is in
	addition orbitally equivalent to a (smooth) Anosov flow
	(\cite{Sha20}).
	This means that if the leafwise quasigeodesic property
	implies the funnel property, then this purely geometric condition
	implies a very strong dynamical systems property: the
	foliation is the flow foliation of an Anosov flow, up
	to topological equivalence..
	
	In this article we prove that the funnel property is
	not a consequence of leafwise quasigeodesic behavior:
	
	\begin{theorem}\label{main}
		There are examples of leafwise quasigeodesic
		foliations in dimension $3$ which do not have the funnel
		property.
	\end{theorem}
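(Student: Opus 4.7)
The theorem is an existence statement, so the plan is to construct a concrete example. A natural starting point is the weak stable foliation $\F = \F^{ws}$ of a (topological) Anosov flow $\Phi$ on a closed $3$-manifold $M$ whose leafwise universal covers are isometric to $\HH^{2}$; the unit tangent bundle $T^{1}\Sigma$ of a closed hyperbolic surface $\Sigma$, with its geodesic flow, is the simplest test case. The flow foliation $\G_{0}$ then subfoliates $\F$ by leafwise quasigeodesics and has the funnel property: in each leaf of $\F$, all orbits share a common forward ideal point. The goal is to construct a different $1$-dimensional subfoliation $\G$ of $\F$ which is still leafwise quasigeodesic with uniform constants but for which the funnel property fails.

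The model construction is leafwise. Given a leaf $L$ of $\F$, view its universal cover $\tilde L$ as the upper half plane so that $\tilde\G_{0}$ is the family of vertical geodesic lines (all asymptotic to $\infty$), and so that each deck transformation of $\tilde L \to L$ is of the form $z \mapsto \lambda z$. Replace $\tilde\G_{0}$ by the family of Euclidean semicircles $\{|z|=r\}_{r>0}$: these are also geodesics in $\HH^{2}$, the family is invariant under every $z\mapsto \lambda z$, and distinct semicircles have distinct pairs of ideal endpoints on $\partial\HH^{2}$. Passing to the quotient produces a leafwise foliation of $L$ by honest geodesics of $\tilde L$ which is manifestly not funnel. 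This gives the correct local model inside each leaf.

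To upgrade the leafwise picture to an actual $1$-dimensional subfoliation of $M$, one must carry out the above construction continuously across leaves and $\pi_{1}(M)$-equivariantly in $\tilde M$. Concretely, one picks in each leaf of $\tilde \F$ a continuously varying reference geodesic (the analogue of the imaginary axis), together with a continuously varying family of target endpoints (the analogue of $\{\pm r\}$), compatible with the deck action; natural choices arise from auxiliary geometric data on $M$ such as a second flow, a transverse two dimensional foliation, or a periodic geodesic lamination in each leaf. Once $\G$ is defined, the leafwise quasigeodesic property with uniform constants must be verified across the transverse direction, and failure of the funnel property on a dense set of leaves then follows immediately from the local model.

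The main obstacle is the uniformity step at the end: compactness of $M$ does not automatically propagate the leafwise geodesic property of the local model to a uniform bi-Lipschitz bound for the deformed leaves in nearby leaves of $\F$, because the continuous variation of the reference geodesic can distort the bending. One expects to invoke stability results for quasigeodesics in $\HH^{2}$, in particular the Morse lemma asserting that every quasigeodesic lies within bounded Hausdorff distance of a genuine geodesic, together with explicit estimates on the bending angles. The bulk of the paper's technical work should be concentrated precisely here, in producing quantitative quasi-isometric embedding constants valid in every leaf of $\F$ simultaneously.
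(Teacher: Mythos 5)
Your approach and the paper's are structurally opposite, and yours has an unaddressed gap that is more basic than the one you flag. The paper keeps the one-dimensional subfoliation fixed (the flow lines of an Anosov flow $\Phi$) and constructs a \emph{new} two-dimensional super-foliation $\F$; crucially $\Phi$ is chosen \emph{non-transitive} (Franks--Williams type, with only two-dimensional basic sets), and $\F$ is taken to equal $\F^{wu}$ on the attractor, $\F^{ws}$ on the repeller, and, in the open product region in between, to be the flow saturation of a curve foliation on a transverse torus which is itself transverse to both $\F^{ws}\cap T$ and $\F^{wu}\cap T$. You instead keep $\F^{ws}$ of a \emph{transitive} flow (the geodesic flow on $T^1\Sigma$) and propose to replace the flow lines by a new family of geodesics in each leaf. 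Your leafwise model (semicircles $|z|=r$ in place of vertical lines) is fine in isolation, but the globalization step is where the argument breaks, not the uniformity step.

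The semicircle family in a leaf $L$ is determined, after placing the funnel point at $\infty$, by the choice of the second reference endpoint $0\in\partial\HH^2$, equivalently by a choice of one flow line in $L$. On annular leaves deck invariance forces this to be the periodic orbit; on planar leaves it is a free choice that must vary continuously and $\pi_1(M)$-equivariantly. The total datum is therefore a continuous equivariant selection of one orbit per weak stable leaf, i.e.\ an equivariant continuous section of the projection from the orbit space of $\TPhi$ to the leaf space of $\FF^{ws}$. For the geodesic flow this is a very rigid structure: the values forced on the dense set of periodic leaves have no reason to extend continuously, and the resulting $\pi_1(M)$-invariant curve in the orbit space would descend to a two-dimensional flow-invariant set in $M$ meeting every weak stable leaf in a single orbit, which is essentially a global cross section -- and geodesic flows admit none. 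So it is not even clear that your $\G$ can be assembled into a foliation of $M$, let alone one whose lift to $\MM$ is by properly embedded lines. The obstacle you name at the end (uniform quasigeodesic constants across leaves) is indeed serious and occupies Sections~5--6 of the paper, but it is addressed only \emph{after} an honest foliation is in hand; the paper secures that first, via Theorem~\ref{t.existence} and Lemma~\ref{lem.emb}, and the non-transitivity of $\Phi$ is not incidental: the product region between attractor and repeller is exactly where the new super-foliation can be built freely from a torus transversal and where the non-funnel leaves live. Your proposal, as written, has no analogue of that room.
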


	\vskip .05in
	We now briefly explain 
	one class of examples: start with the Franks-Williams
	example of a non transitive Anosov flow $\Phi$. 
	This is obtained as follows: start with a suspension Anosov flow
	and do a DA (derived from Anosov) blow up of a periodic orbit transforming
	it into (say) a repelling orbit $\alpha$. 
	Remove a tubular neighborhood of $\alpha$ so that the resulting semiflow
	is incoming in the complement of the removed 
	tubular neighborhood of $\alpha$. Glue this manifold with 
	boundary with a copy of it which has a reversed
	flow. One fundamental result is that the ensuing flow $\Phi$ in the
	final manifold $\M$ is Anosov
	\cite{FW80,BBY17}. 
	This holds for certain isotopy classes of gluings and
	certain gluing maps satisfying transversality conditions.
	These were the first examples of
	non transitive Anosov flows in dimension $3$.
	Our examples use this flow.
	There is a smooth torus
	$T$ in $\M$ transverse to the flow. 
	There is a single two dimensional attractor and a single two
	dimensional repeller of the flow $\Phi$ in $M$.
	Start with a one dimensional foliation
	$Z$ in $T$ which is transverse to the intersections of both the
	weak stable and the weak unstable foliations of $\Phi$ with $T$.
	Saturate $Z$ by the flow producing a collection of two 
	dimensional sets embedded in $\M$. The flow saturation of
	$T$ is an open subset $V$ of $M$, and the collection of the
	two dimensional subsets described is a two dimensional
	foliation in $V$. In addition $V$
	is exactly the complement of the union of the
	attractor and the repeller of $\Phi$. Complete the foliation in $V$
	to a foliation
	$\F$ in $\M$ which is the weak unstable
	foliation of $\Phi$ in the attractor of $\Phi$ 
	and the weak stable foliation
	in the repeller of $\Phi$. 
	The proof that this is in fact a foliation of $\M$ 
	depends on a careful choice of the one
	dimensional foliation $Z$ in $T$. There is a subtle point here in
	that if one chooses an arbitrary foliation $Z$ in $T$,  then when
	lifting to $\MM$ the lifted sets may not be properly embedded in $\MM$
	and so $\F$ would not be a foliation in $\M$. 
	This is carefully analyzed in section \ref{s.foliation} and
	there we prove that for appropriate choices of $Z$ the object
	$\F$ we construct is a foliation.
	The super foliation is this two dimensional
	foliation $\F$. 
	The subfoliation $\G$ of $\F$ 
	is the foliation by flow lines of $\Phi$.
	Each leaf of $\F$ is saturated by flow lines. 
	We prove that $\G$ is a leafwise quasigeodesic subfoliation
	of $\F$, but $\G$ does not have the funnel property.
	There is an Anosov flow $\Phi$ in this example, however notice that
	the super foliation $\F$ is neither the weak stable nor
	the weak unstable foliation of $\Phi$, but rather a different foliation.
	In fact in the same way one can construct an infinite number of
	inequivalent examples with the same starting flow $\Phi$. The foliations are pairwise
	distinguished because of how they intersect the torus
	$T$ in foliations which are not equivalent.

	In this article we 
	consider more general examples. We prove that one
	can construct examples starting with any non transitive
	Anosov flow $\Phi$ in dimension $3$  so that all the basic sets
	have dimension $2$. As in the case of the
	Franks-Williams example, we construct super foliations which have
	Gromov hyperbolic leaves and whose leaves are saturated
	by flow lines of $\Phi$. We show that the 
	subfoliation $\G$ by flow lines of $\Phi$
	is by quasigeodesics in each leaf of the super foliation $\F$.
	This is the hardest step to prove.
	This involves a very careful analysis of the geometry in these
	examples. The proof that $\G$ is not funnel is simpler than
	proving it is leafwise quasigeodesic as a subfoliation of $\F$.
	
	We finish this introduction mentioning another reason why we analyzed
	this question: this comes from partially hyperbolic dynamics.
	Let $f$ be a partially hyperbolic diffeomorphism in 
	a closed $3$-manifold $M$ (we refer to \cite{BFP20} for definitions
	and properties of partially hyperbolic diffeomorphisms).
	Under very general orientability conditions, there is a pair
	of transverse two
	dimensional branching foliations (center stable and
	center unstable foliations) associated with the partially
	hyperbolic diffeomorphism which intersect in an one dimensional
	branching foliation, called the center foliation
	\cite{BI08}. The center
	foliations subfoliates both the center stable and center unstable
	foliations.
	In some situations (\cite{BFP20}) it is shown
	that the center foliation is a leafwise quasigeodesic subfoliation
	of both the center stable and center unstable foliations.
	But in \cite{BFP20} it is proved that 
	in the partially hyperbolic setting the leafwise quasigeodesic
	property implies 
	that the center foliation has the funnel property (as a subfoliation
	of both super foliations). The proof of this also uses dynamical
	systems properties, namely partial hyperbolicity. 
	An open question from the article \cite{BFP20}
	was to whether the funnel 
	property could be derived strictly from the leafwise quasigeodesic
	property in (say) the center stable foliation. 
	In this article we prove that this is not the case, by constructing
	counterexamples for general foliations.
	
	\textbf{Acknowledgement:} We thank Rafael Potrie for providing a crucial idea which 
	greatly simplified the proof of Lemma \ref{lem.emb}.

	\section{Preliminaries}
	
	\par A $C^1$-flow $\Phi_t:\M\rightarrow\M$ on a Riemannian manifold $\M$ is $Anosov$ if the tangent bundle $T\M$ splits into three $D\Phi_{t}$-invariant sub-bundles $T\M=E^{s}\oplus E^{0} \oplus E^u$ and there exists two constants $C,\lambda>0$ such that 
	\begin{itemize}
		\item $E^0$ is generated by the non-zero vector field defined by the flow $\Phi_t$;
		\item For any $v\in E^s$ and $t>0$, $$||D\Phi_{t}(v)||\leq Ce^{-\lambda t}||v||$$
		\item For any $w\in E^u$ and $t>0$, $$||D\Phi_{t}(w)||\geq Ce^{\lambda t}||w||$$
	\end{itemize}
	
	The definition is independent of the choice of the Riemannian metric $||.||$ as the underlying manifold $\M$ is compact. For a point $x\in \M$, the set $\gamma_x=\{\Phi_t(x)|t\in \RR\}$ is called the \textit{flow line} of $x$. The collection of all flow lines of a flow defines a one-dimensional  foliation on $\M$. For an Anosov flow there are several flow invariant foliations associated to the flow and these foliations play a key role in the study of Anosov flows. \\

	\begin{property}[\cite{Ano63}]
		For an Anosov flow $\Phi_t$ on $\M$, the distributions $E^u$, $E^s$, $E^{0}\oplus E^{u}$ and $E^{0}\oplus E^{s}$ are uniquely integrable. The associated foliations are denoted by $\F^u$, $\F^s$, $\F^{wu}$ and $\F^{ws}$ 
		respectively and they are called the strong unstable, strong stable, weak unstable and weak stable foliation on $\M$. 
	\end{property}
	
	\noindent
	{\bf {For the remainder of this article we will assume that $\M$ is a closed three dimensional Riemannian manifold.}}
	
	\vskip .05in
	We also assume that $\M$ is 
	equipped with an Anosov flow $\Phi_t$ and $\widetilde{\Phi}_t$ 
	is the lift of the flow $\Phi_t$ in $\MM$, the universal cover of $\M$. The strong unstable, strong stable, weak unstable and weak stable foliation of $\TPhi$ are the lifts of  the foliations $\F^u$, $\F^s$, $\F^{wu}$ and $\F^{ws}$ in the universal cover $\MM$, and these foliations in $\MM$ are denoted by $\FF^u$, $\FF^s$, $\FF^{wu}$ and $\FF^{ws}$ respectively. 
	
	
	A map $f:(X_1,d_1)\rightarrow (X_2,d_2)$ is called a $(K,s)$\textit{-quasi-isometric embedding} if there exits $K>1$ and $s>0$ such that for all $x,y\in X_1$
	$$\frac{1}{K}d_1(x,y)-s\leq d_2 (f(x),f(y)) \leq K d_1(x,y)+s $$
	A $(K,s)$-\textit{quasigeodesic} in $X$ is the image of a $(K,s)$-\textit{quasi-isometric embedding}\\ $\gamma:[a,b]\rightarrow X$ where $[a,b]$ is a closed 
	interval on $\RR$ with the Euclidean metric.
	The interval could be infinite (that is $a = -\infty$, $b = \infty$ or both), in which case the notation would be of a half open or open interval.
	If we have a map $\RR \rightarrow X$ with rectifiable image we consider
	the arclength metric in the domain $\RR$.
	
	\begin{lemma}\label{lqg}
		Flow lines on the leaves in $\FF^{ws}$ and $\FF^{wu}$ are quasigeodesics with respect to the metric induced from $\MM$ in their respective leaves. 
	\end{lemma}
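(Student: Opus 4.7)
The plan is to verify directly the two inequalities of a $(K,s)$-quasi-isometric embedding for the parametrization $t \mapsto \wwp_t(\tilde x)$ of a flow line $\gamma$ sitting in a leaf $\widetilde L$ of $\FF^{ws}$, equipped with the induced Riemannian metric $d_{\widetilde L}$. By time reversal the argument for $\FF^{wu}$ is identical, so I focus on $\FF^{ws}$.

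The upper bound is immediate. By compactness of $\M$ the flow vector field $X$ satisfies $v_{\max} := \sup_{\M}\|X\| < \infty$, so the flow-line segment itself is a curve in $\widetilde L$ from $\wwp_s(\tilde x)$ to $\wwp_t(\tilde x)$ of arclength at most $v_{\max}|t-s|$. Hence $d_{\widetilde L}(\wwp_s(\tilde x),\wwp_t(\tilde x)) \leq v_{\max}|t-s|$.

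For the lower bound I would use the strong stable holonomy inside $\widetilde L$. The key structural point is that $\widetilde L$ is simply connected and carries two transverse, properly embedded one-dimensional foliations: the flow lines, and the lifted strong stable foliation $\FF^s$ restricted to $\widetilde L$. This yields a global product structure on $\widetilde L$ in which each strong stable leaf meets $\gamma$ at a unique point, defining a projection $\pi:\widetilde L \to \gamma$. The main step is to prove that $\pi$ is uniformly Lipschitz with a constant $K$ independent of the leaf: cover $\M$ by finitely many flow boxes adapted simultaneously to the strong stable, flow, and weak stable foliations, and use smoothness of the three foliations on each compact box to obtain a uniform bound on the local holonomy-projection; this bound lifts to $\MM$ intact because the covering map $\MM \to \M$ is a local isometry. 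Given this Lipschitz constant $K$, any rectifiable curve $\alpha$ in $\widetilde L$ from $\wwp_s(\tilde x)$ to $\wwp_t(\tilde x)$ maps under $\pi$ to a curve on $\gamma$ joining the same two endpoints; this image therefore has length at least $v_{\min}|t-s|$, where $v_{\min} := \inf_{\M}\|X\| > 0$ is positive by compactness and the nonvanishing of $X$, while simultaneously its length is at most $K \cdot \mathrm{length}(\alpha)$. Taking the infimum over $\alpha$ yields $d_{\widetilde L}(\wwp_s(\tilde x),\wwp_t(\tilde x)) \geq (v_{\min}/K)|t-s|$.

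Combining the two bounds makes $t \mapsto \wwp_t(\tilde x)$ a $(K v_{\max}/v_{\min},0)$-quasi-isometric embedding from $\RR$ (with flow-time metric) into $\widetilde L$; reparametrizing by arclength changes the domain metric by a factor in $[v_{\min},v_{\max}]$, preserving the quasi-isometric embedding property with adjusted constants. The constants are uniform across all flow lines and all leaves because they depend only on $v_{\min}$, $v_{\max}$, and $K$. The main obstacle is the uniform Lipschitz estimate on $\pi$: transversality and smoothness inside each individual flow box are standard, but extracting a single constant valid for every leaf requires the compactness argument above, together with the fact that the covering map is a local isometry so that the local estimates pull back to $\MM$ without loss.
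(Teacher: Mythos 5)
Your argument has the right structure and arrives at the correct conclusion, and it is essentially a dual formulation of the paper's own proof: the paper reparametrizes to unit speed, introduces a product path metric $ds' = dw + dy$ on a leaf $L$ of $\FF^{ws}$ (flow-time component plus strong stable component), cites \cite{Fen94} for the fact that $ds'$ is uniformly quasi-isometric to the induced Riemannian metric, and observes that flow lines are $ds'$-length-minimizing, hence $ds$-quasigeodesics. Your map $\pi$ is precisely the projection $(w,y)\mapsto w$ in those coordinates, so the two arguments are the same modulo phrasing.

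There is, however, a real gap in your justification that $\pi$ is uniformly Lipschitz. The Lipschitz constant of $\pi$ at a point $p$ is \emph{not} a local quantity: $\pi(p)$ is reached by sliding along the strong stable leaf from $p$ all the way to $\gamma$, and that strong stable arc has unbounded length as $p$ ranges over $\widetilde L$, so $D\pi_p$ incorporates strong stable holonomy across the whole arc. Covering $\M$ by flow boxes gives only a Lipschitz bound for the holonomy across one short step, and naively composing such bounds across $n$ boxes yields a constant of the form $(1+\epsilon)^n$, which blows up. The missing ingredient is structural rather than a compactness estimate: the flow carries strong stable leaves to strong stable leaves, so inside a weak stable leaf the strong stable holonomy between any two flow lines is exactly the identity in flow-time coordinates. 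After reparametrizing to unit speed (as the paper does at the outset), that holonomy is therefore an isometry in arc length, no matter how long the connecting strong stable arc is; combined with the uniform positive lower bound on the angle between the flow direction and the strong stable direction (this is where compactness of $\M$ genuinely enters), one gets the global Lipschitz bound for $\pi$ with no compounding. With that observation inserted your proof closes; as written, the local-to-global passage is not justified and the assertion that the bound ``lifts to $\MM$ intact'' does not address it.
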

	
	\begin{proof}
		Reparametrize the flow to have unit speed. The new flow
		is still Anosov \cite{Ano63} with the same flow lines and the
		same weak stable and weak unstable foliations. However the
		strong stable and strong unstable leaves may change.
		
		Any leaf $L$  of $\FF^{wu}$ is subfoliated by $\FF^{u}$ and by the flow lines, these two foliations are transversal to each other. We can define a metric $ds'$ on $L$ by 
		$ds'=dw+dy$ where $dw$ measures length along flow lines and $dy$ measures length along unstable curves. Suppose $ds$ is the Riemannian 
		metric induced on $L^{wu}$ from $\MM$. The two path metrics induced
		in $L$ from $ds'$ and $ds$ on are uniformly
		quasi-isometric to each other \cite{Fen94}. Moreover each flow line in the leaf $L$ is a length minimizing
		curve in the $ds'$ metric, hence flow lines
		are uniform quasigeodesics with respect to 
		the metric induced by $ds$. Similarly it can be shown that flow lines on leaves in $\FF^{ws}$ are quasigeodesic with respect to the induced metric on their respective leaves. 
	\end{proof}
	
	\begin{definition}
		Suppose $\F$ is a two dimensional foliation on $\M$ with
		Gromov hyperbolic leaves when lifted to the universal cover. Suppose that $\G$ is a one dimensional foliation on $\M$ which subfoliates $\F$.
		In this situation we say that leaves of $\G$ are leafwise quasigeodesic in $\F$ if every leaf of $\G$ is a quasigeodesic in the respective leaf of $\F$ containing it when lifted to the universal cover of the leaf.   
		In that case we say that $\G$ is a leafwise quasigeodesic subfoliation
		of $\F$.
	\end{definition}

	In Lemma \ref{lqg} the flow lines of $\Phi_t$ are shown to
	be \textit{leafwise quasigeodesics} in the leaves of $\F^{ws}$ and $\F^{wu}$.
	
	The leaves in $\FF^{ws}$ and $\FF^{wu}$ are Gromov hyperbolic with respect to the Riemannian metric on the leaves induced from the metric on $\MM$ \cite{Fen94}. Suppose that $L$ is a leaf either in $\FF^{ws}$ or in $\FF^{wu}$. 
	As the leaves are Gromov hyperbolic, we can define the ideal boundary of $L$ which is homeomorphic to the circle and we denote it as $S^1(L)$. 
	The compactification $L \cup S^1(L)$ is homeomorphic to a 
	closed disk.
	As the flow lines are quasigeodesics in $L$, they define two distinct
	ideal points on $S^1(L)$: If $\gamma$ is a flow line in $L$ then the forward ray of $\gamma$ defines an unique ideal point on $S^1(L)$ as $\gamma$ is a quasigeodesic, which is called the
	\textit{forward or positive ideal point} of $\gamma$. Similarly we 
	define the \textit{backward or negative ideal point} 
	as the limit of the ray in the backwards direction.
	The following statement describes the equivalence between the forward and backward flow rays in the leaves of $\FF^{ws}$ and $\FF^{wu}$ and the points on their ideal boundaries.
	
	\begin{figure}
		\begin{subfigure}[b]{0.45\textwidth}
			\centering
			\includegraphics[width=\textwidth]{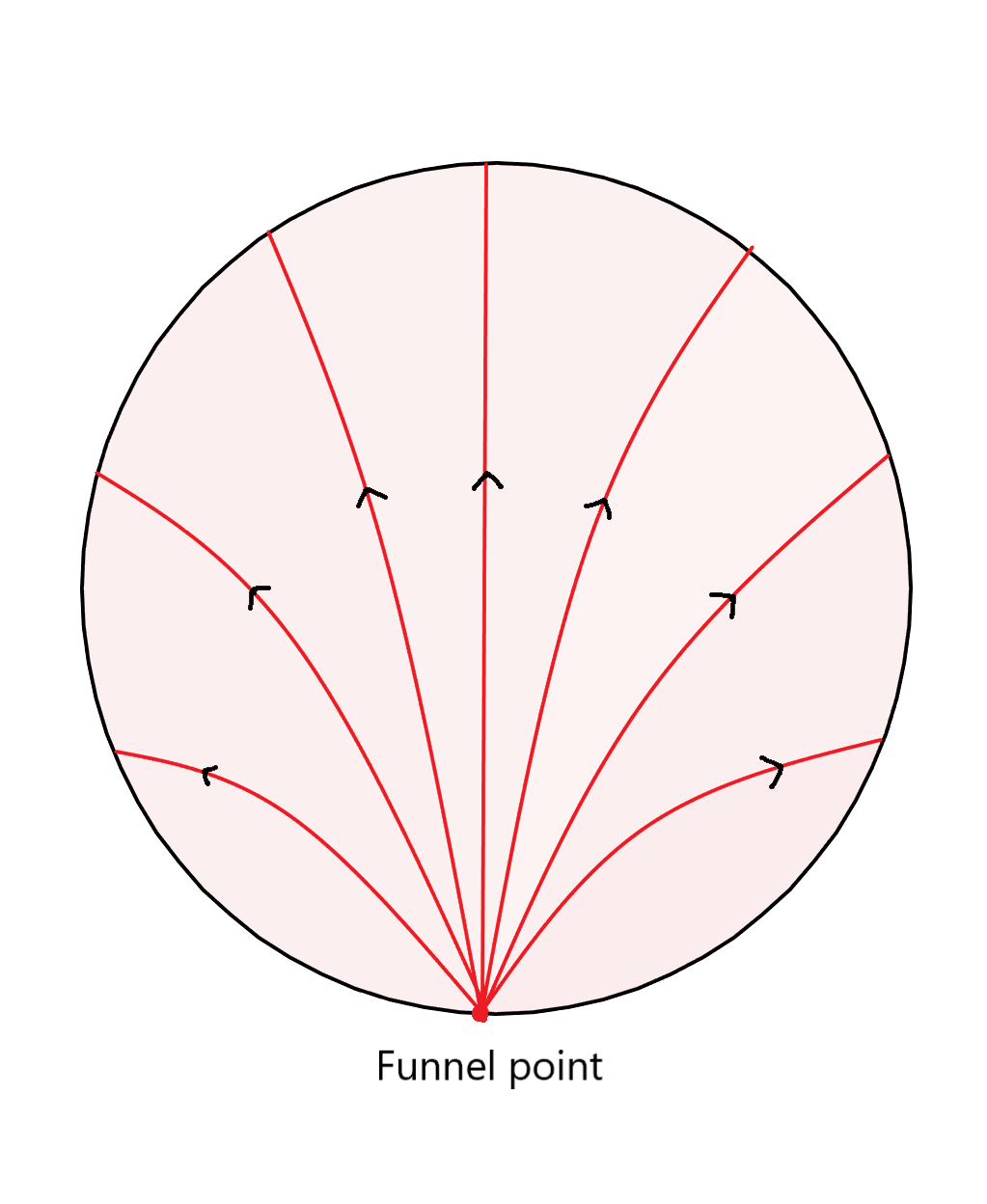}
			\caption{leaves in $\widetilde{\F}^{wu}$}
			\label{fig:y equals x}
		\end{subfigure}
		\hfill
		\begin{subfigure}[b]{0.45\textwidth}
			\centering
			\includegraphics[width=\textwidth]{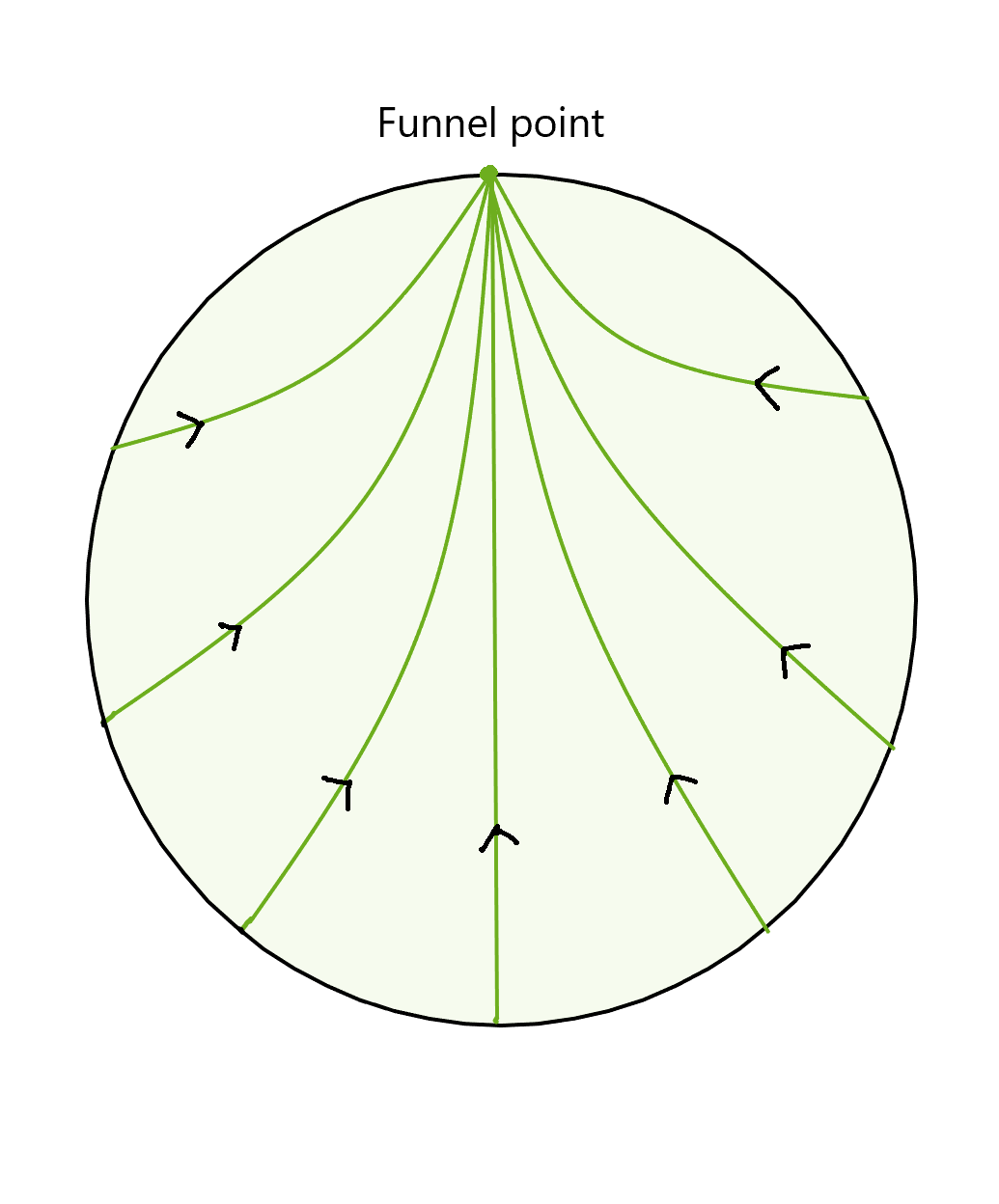}
			\caption{Leaves in $\widetilde{\F}^{ws}$}
			\label{fig:three sin x}
		\end{subfigure}
		
		\caption{Geometry of flow lines on the leaves in $\widetilde{\F}^{wu}$ and $\widetilde{\F}^{wu}$ }
		\label{UAS}
	\end{figure}
	
	\begin{property}[\cite{Fen94}]{\label{2.4}}
		For a  leaf $L$ either in $\FF^{ws}$ or $\FF^{wu}$, all the points on $S^1(L)$ correspond to forward or backward flow rays on $L$. If $L\in\FF^{ws}$ then all the flow lines on $L$ have a common forward ideal point and all the other ideal points are backward ideal points on $S^1(L)$ of the flow lines. No two different flow lines define a common negative or backward ideal point. 
		
		If $L\in\FF^{wu}$ then all the flow lines have a common backward ideal point and all the forward flow lines defines all the other ideal points on $S^1(L)$. No two different flow lines define the same positive or forward ideal point.  
	\end{property}
	
	The property for forward ideal points in $\FF^{ws}$ is immediate as
	these flow lines are forward asymptotic, a direct consequence of the definitions. The property for backward 
	ideal points in leaves of $\FF^{ws}$ is not as immediate and is proved
	in \cite{Fen94}.
	
	\begin{definition}
		Suppose that $\G$ is a leafwise quasigeodesic subfoliation of $\F$.
		If a leaf $L$ of $\FF$ has all leaves of $\GG$ in it sharing
		a common ideal point then the projected leaf $\pi(L)$ of $\F$ in $\M$
		is called a \textit{funnel leaf}. 
		In this case the common ideal points shared by all the flow lines in $L$ is called the \textit{funnel point} of $L$.
	\end{definition}
	
	\begin{corollary}\label{f}
		By property \ref{2.4}, for an Anosov flow $\Phi_t$ on a three manifold $\M$, with the flow foliation a leafwise 
		quasigeodesic subfoliaton
		of both $\F^{ws}$ and $\F^{wu}$ the following happens:
		all the leaves in weak stable foliation $\F^{ws}$ and weak unstable foliation $\F^{wu}$ are funnel leaves as shown in figure \ref{UAS}.
	\end{corollary}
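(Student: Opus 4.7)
The statement is essentially a direct translation of Property \ref{2.4} into the language of funnel leaves, so the plan is to unpack the definitions and verify that the hypotheses of the corollary let us apply Property \ref{2.4} on every leaf.

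First, I would fix a leaf $L$ of $\FF^{ws}$ and apply Property \ref{2.4} directly: all flow lines in $L$ share a common forward ideal point $p \in S^1(L)$. Since the flow foliation is, by assumption, a leafwise quasigeodesic subfoliation of $\F^{ws}$, each flow line in $L$ is a quasigeodesic and hence defines a well-defined pair of ideal points in $S^1(L)$ (as explained in the paragraph preceding Property \ref{2.4}). The common forward ideal point $p$ therefore serves as the funnel point in the sense of the definition of funnel leaf, so the projection $\pi(L)$ is a funnel leaf of $\F^{ws}$. Since $L$ was an arbitrary leaf of $\FF^{ws}$ and every leaf of $\F^{ws}$ is the projection of some such $L$, every leaf of $\F^{ws}$ is a funnel leaf.

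For the weak unstable case I would argue symmetrically: for $L \in \FF^{wu}$, Property \ref{2.4} provides a common backward ideal point for all flow lines in $L$, and the leafwise quasigeodesic hypothesis for $\F^{wu}$ guarantees that this backward ideal point is well defined as a point of $S^1(L)$. This gives the funnel point for $\pi(L)$, so every leaf of $\F^{wu}$ is a funnel leaf.

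There is no real obstacle here; this is a bookkeeping corollary whose content is that the conclusion of Property \ref{2.4} is exactly the defining condition of funnel leaf. The only thing worth being careful about is that the definition of funnel leaf is stated for leaves in the universal cover and then projected, and that the leafwise quasigeodesic hypothesis (which in the Anosov setting is Lemma \ref{lqg}) is what makes the notion of ideal endpoint of a flow line in $S^1(L)$ meaningful. Once these two observations are made, the proof is a single sentence in each case.
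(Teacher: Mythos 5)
Your proof is correct and follows exactly the route the paper implicitly takes: the corollary is stated without a written proof precisely because it is the translation of Property \ref{2.4} into the funnel-leaf terminology, which is what you spell out. Your only addition is the explicit observation that the leafwise quasigeodesic hypothesis (Lemma \ref{lqg}) is what makes the ideal endpoints of flow lines in $S^1(L)$ well defined, which is a correct and worthwhile remark but not a departure from the paper.
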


	\noindent
	\textbf{Basic sets of Anosov flows on three manifolds:} The Anosov flow $\Phi$ is called $transitive$ if there exists a flow line $\gamma$ dense in $\M$, otherwise the flow is \textit{non transitive}. The first example of a non transitive Anosov flow was constructed by John Franks and Bob Williams in their 1980's article \cite{FW80}.
	
	\par A point $x\in\M$ is called \textit{nonwandering} if for any open neighborhood $U$ of $x$ and any $t_0 > 0$,
	there exists $t> t_0$ such that $\Phi_{t}(U)\cap U\neq \emptyset$, the set of all nonwandering points is denoted by $\Omega(\Phi)$. For a non transitive Anosov flow $\Phi_t$ the nonwandering set $\Omega(\Phi)$ is not equal to the whole manifold $\M$ and according to \textit{Spectral Decomposition Theorem} \cite{Sma67}, $\Omega(\Phi)$
	is decomposed into finitely many closed, disjoint, $\Phi_t$-invariant and transitive $basic$ sets $\{ \Lambda_i, i = 1, ..., n\}$, so
	$\Omega(\Phi)=\bigsqcup\limits_{i=1}^{n} \Lambda_i$. 
	
	Suppose $\Lambda$ is a basic set of a non transitive Anosov flow $\Phi_t$ on a three manifold. Then $\Lambda$ can be characterised in four different types \cite{ Sma67, Bru93},
	\begin{itemize}
		\item $dim(\Lambda)=2$ and  the basic set $\Lambda$ is an \textit{attractor}, i,e. there exists an open set U containing $\Lambda$ such that $\bigcap\limits_{t>0}\Phi_t(U)=\Lambda$.
		\item $dim(\Lambda)=2$ and  the basic set $\Lambda$ is a \textit{repeller}, i,e $\Lambda$ is an attractor for the reversed flow $\Psi_t=\Phi_{-t}$.
		\item $dim(\Lambda)=1$ and $\Lambda$ is a saddle with local cross
		section a Cantor set.
		\item $dim(\Lambda)=1$ and $\lambda$ is a hyperbolic periodic orbit. 
	\end{itemize}
	
	\begin{property}
		If $\Lambda$ is an $attractor$ then $\Lambda$ is saturated by weak unstable leaves.
		If $\Lambda$ is a $repeller$ then $\Lambda$ is saturated by weak stable leaves.
	\end{property}
	
	From now on we assume the following:
	
	\begin{assumption}
		We assume throughout that the
		Anosov flow $\Phi$ on $\M$ is non transitive and its nonwandering set $\Omega$ consists of two dimensional basic sets only.
	\end{assumption}

	In other words we assume that $\Phi$ has no one dimensional basic set.
	As $\M$ is compact there exits at least  one attracting basic set and one repelling basic set. Suppose $\A$ denotes the union of all attracting basic sets and $\R$ denotes the the union of all repelling basic sets. We will denote the collection of all lifts of $\A$ in $\MM$ by $\widetilde{\A}$. $\widetilde{\A}$ is the the attracting set for $\TPhi_{t}$ defined on $\MM$. The union of all lifts of $\R$ is denoted by $\widetilde{\R}$ similarly. 
	
	\begin{property}\cite{KH95} \label{conv1}
		Suppose $\gamma$ is a flow line not contained in $\A$ or $\R$. Then there exists a flow line in $\A$, say $\alpha$, such 
		that the forward rays of $\gamma$ and $\alpha$ are asymptotic in $\M$. Similarly there exits a flow line $\beta$ in $\R$ such that the backward rays of $\gamma$ and $\beta$ are asymptotic in $\M$.
	\end{property}
	
	\begin{proof} This is classical \cite{KH95}, we explain briefly.
		Given the orbit $\gamma$ it gets closer and closer to the attractor
		$\A$ in future time. Fix $x$ in $\gamma$.
		Every point in the attractor has a local
		product structure, see for example Proposition 6.4.21 of
		\cite{KH95}. Hence for a $t$ sufficiently big $\Phi_t(x)$ is
		$\epsilon$ near the attractor where $\epsilon$ is smaller than
		the size of product boxes of the hyperbolic set $\A$. Hence
		$\Phi_t(x)$ is $\epsilon$ near some point $z$ in $\A$ and there
		is $w$ in $\A$ near $z$ so that $\Phi_t(x)$ is in the stable
		manifold of $w$ because of the local product
		structure in sets of size $\epsilon$. This proves the result.
	\end{proof}
	
	The attractor is saturated by leaves of $\F^{wu}$ and the repeller
	saturated by leaves of $\F^{ws}$.
	In the property above, one can choose the flow line $\alpha$ in the attractor $\A$ to be
	in the boundary of the attractor.
	This means the following: let $x \in \alpha$ and $L$ the $\F^{ws}$ leaf through $x$.
	Let $D$ be a small disk in $L$ with $x$ in the interior. The local 
	flow line of $x$ cuts $D$ into two components $D_1, D_2$ (which are
	also disks). The condition is that one of $D_1$ or $D_2$ does
	not intersect the attractor $\A$.
	Suppose it is $D_1$. The ``$D_1$ side'' of $\alpha$ in $L$ is the
	side so that $\gamma$ is getting closer and closer to $\alpha$.
	
	\section{The foliation $\F$}
	\label{s.foliation}
	
	Throughout the article
	we will fix  a non transitive Anosov flow $\Phi$ as in
	the previous section, that is, $\Phi$ has only two dimensional
	basic sets.
	
	To prove our results 
	we will consider a two dimensional foliation $\F$ in $\M$ such that:
	\begin{itemize}
		\item on the attractor $\A$, \ \ $\F|_{\A}=\F^{wu}|_{\A}$
		\item on the repeller $\R$, \ \ $\F|_{\R}=\F^{ws}|_{\R}$
		\item on $\M\setminus\{\A\cup\R\}$, $\F$ is transversal to both $\F^{ws}$ and $\F^{wu}$.
		\item every leaf $L\in \F$ is subfoliated by the flow lines of $\Phi$, i,e every leaf $L$ is $\Phi_{\RR}$-invariant.  
	\end{itemize}
	We will denote the lift of $\F$ in the universal cover $\MM$ by $\FF$. Leaves of $\FF$ in $\widetilde{\A}$ and $\widetilde{\R}$ look like the leaves in figure \ref{UAS}. Leaves in $\MM\setminus(\widetilde{\A}\cup\widetilde{\R})$ are described in figure \ref{NAR}. It is not immediate from definition why the leaves not contained in $\widetilde{\A}$ and $\widetilde{\R}$ are as described in figure \ref{NAR}, we will prove this later in this
	article.
	
	\begin{theorem}\label{examples} \label{t.existence}
		There are foliations $\F$ with the properties described above.
	\end{theorem}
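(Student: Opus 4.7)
The plan is to build $\F$ in three stages: first construct $\F$ on the wandering open set $V := \M \setminus (\A \cup \R)$ by flow-saturating a suitable one-dimensional foliation on a transverse cross section, then glue in the $\F^{wu}$-leaves on $\A$ and the $\F^{ws}$-leaves on $\R$, and finally verify that the resulting partition of $\M$ is actually a foliation. The last step is where the main difficulty lies.

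For the first step I would take a smooth properly embedded surface $S \subset V$ transverse to $\Phi$ and meeting every orbit in $V$ (for the Franks--Williams example this is a single torus, and for a general non-transitive Anosov flow with only two-dimensional basic sets $S$ can be chosen as a finite disjoint union of transverse tori or annuli). On $S$ the foliations $\F^{ws}$ and $\F^{wu}$ induce two transverse one-dimensional foliations, call them $\mathcal{S}$ and $\mathcal{U}$. Because $S$ is $2$-dimensional and $\mathcal{S}, \mathcal{U}$ are already transverse, there are many one-dimensional foliations $Z$ on $S$ simultaneously transverse to both. Flow-saturating $Z$ gives a $2$-dimensional $\Phi$-invariant foliation $\F_V$ of $V$, and transversality of $\F_V$ to $\F^{ws}$ and $\F^{wu}$ at points of $S$ propagates to every point of $V$ because all three foliations are $\Phi$-invariant.

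For the extension to $\A \cup \R$ I would use Property \ref{conv1}: every $x \in V$ has a forward orbit asymptotic to some specific boundary orbit $\alpha \subset \A$, which lies in a unique leaf $L^{wu}_\alpha$ of $\F^{wu}|_\A$, and this is the unique such leaf on which the $\F_V$-leaf through $x$ accumulates forward in time. I declare $L^{wu}_\alpha$ to lie in the same leaf of $\F$ as $\Phi_\RR(x)$, and symmetrically on the repeller side. Local foliation charts near points of $\A$ (respectively $\R$) then come from pushing the cross-section picture forward (respectively backward) along the flow and gluing to $\F^{wu}$ (respectively $\F^{ws}$) product charts on the hyperbolic basic set.

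The main obstacle is showing that $\F$ is an honest foliation, equivalently that each leaf of $\FF$ is a properly embedded plane in $\MM$. A generic choice of $Z$ fails: if a leaf of $Z$ spirals badly with respect to $\mathcal{S}$ or $\mathcal{U}$, then the corresponding flow-saturated surface lifts in $\MM$ to a non-properly-embedded object, and $\F$ is not a foliation. My plan is to impose on $Z$ a monotone-intersection condition on appropriate lifts: each lifted leaf of $Z$ should cross each lifted leaf of $\mathcal{S}$ and $\mathcal{U}$ in at most one point. Given such a $Z$, proper embeddedness of the leaves of $\FFS$ and $\FFU$ in $\MM$ transfers to proper embeddedness of the flow-saturation, because a lifted leaf of $\F_V$ then inherits a product structure coming from the flow direction and the non-spiraling $Z$-arc on the lifted cross section. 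Producing such a $Z$ explicitly, and checking the monotone-intersection condition, is where I expect the technical heart of the argument to sit, and is the subtle point flagged in the introduction.
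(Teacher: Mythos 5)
Your high-level strategy matches the paper's: flow-saturate a one-dimensional foliation $Z$ on a transverse cross section, transverse to both $\F^{ws}$ and $\F^{wu}$, then fill in with $\F^{wu}|_\A$ and $\F^{ws}|_\R$, with the main difficulty being properness of the lifted leaves in $\MM$. However, the step you flag as the ``technical heart'' is where your argument has a genuine gap, for two reasons.

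First, the ``monotone-intersection'' condition you propose to impose on $Z$ is automatically satisfied once $Z$ is transverse to both $\mathcal{S}=\F^{ws}\cap T$ and $\mathcal{U}=\F^{wu}\cap T$: in the universal cover of $T$ (a plane), a lifted leaf of $Z$ and a lifted leaf of $\mathcal{S}$ (or $\mathcal{U}$) are transverse properly embedded lines, and two such lines intersecting twice would bound a bigon and force a tangency by an index argument. So the condition is not an extra hypothesis to arrange; it is a consequence of transversality, and the paper's example of failure --- taking $F=\F^{wu}\cap T$ for the Franks--Williams flow --- is precisely a case where $F$ is not transverse to $\F^{wu}\cap T$. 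Second, and more importantly, your inference from monotone intersection to proper embeddedness of the flow-saturated leaf $L$ in $\MM$ is not established. A non-spiraling $Z$-arc on the lifted cross section gives a topological product $\widetilde\ell\times\RR$, but that alone does not preclude a sequence $\widetilde\Phi_{t_n}(q_n)$, with $q_n\in\widetilde\ell$ and $t_n\to+\infty$, from converging in $\MM$ to a point of $\widetilde\A$ outside $L$. The paper rules this out by a metric argument (Lemma \ref{lem.emb}): transversality to $\F^{ws}\cap T_i$ gives every point of $\ell$ a uniform amount of unstable length, which grows exponentially under forward flow; together with the flow direction this produces a uniform lower bound $a_0$ on the radius of metric disks in the leaf centered at any point, hence completeness of the leaf metric, hence proper embedding of the lift. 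Your proposal replaces this dynamical/metric estimate with a topological condition that does not by itself yield the completeness you need. A secondary omission is the continuity check at $\A\cup\R$ (that the leaves through $x_n$ converge to the leaf through $x$ when $x_n\to x\in\A\cup\R$), which the paper handles by observing that the tangent plane to the flow-saturated leaf is pushed toward $E^0\oplus E^u$ under forward flow; you gesture at this with ``pushing the cross-section picture forward'' but do not carry it out.
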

	
	\begin{proof}
		We start with an Anosov flow as described above.
		For simplicity assume that $M$ is orientable as well. 
		There is a collection of disjoint  tori $\{ T_i \}$ transverse to the
		flow $\Phi$ which separate the basic sets \cite{Sma67, Bru93}.
		We choose $T_i$ to be smooth. 
		The collection of tori is supposed to be minimal with the property that if an
		orbit is not in $\R$ or $\A$ then it intersects one of the $\{ T_i \}$.
		Let $\gamma$ be such an orbit intersecting a specific $T_i$, let
		$x$ be a point in the intersection. Then the forward orbit of $x$
		is asymptotic to a component $A$ of the attractor $\A$ $-$ this
		uses the fact that there are no one dimensional components
		of the non wandering set of $\Phi$. The set
		of such $x$ so that the forward ray of $x$ is asymptotic to $A$
		is open in $T_i$. This holds for any component $A$ of the attractor
		$\A$. Since the union over such components of $\A$ is all of $T_i$
		and $T_i$ is connected, it follows that all orbits in $T_i$ are
		forward asymptotic to a single component $A$ of $\A$.
		
		In a similar way one proves that if $T_1, T_2$ are tori 
		contained in the complement of the union of the attractor and
		repeller, and $T_1, T_2$ intersect a common orbit of $\Phi$,
		then $T_1, T_2$ intersect exactly the same set of orbits
		of $\Phi$. In other words, if $B$ is a component
		of $M - (\A \cup \R)$, then there is a torus $T$ contained
		in $B$, transverse to $\Phi$ so that $B$ is the flow saturation
		of $T$. Hence we can choose a minimal collection $\{ T_i \}$ 
		of tori transverse to $\Phi$ and intersecting
		all orbits in the complement of $\A \cup \R$, and any such
		orbit intersects a unique $T_i$ and only once.
		
		\begin{figure}
			\centering
			\includegraphics[width=0.6\linewidth]{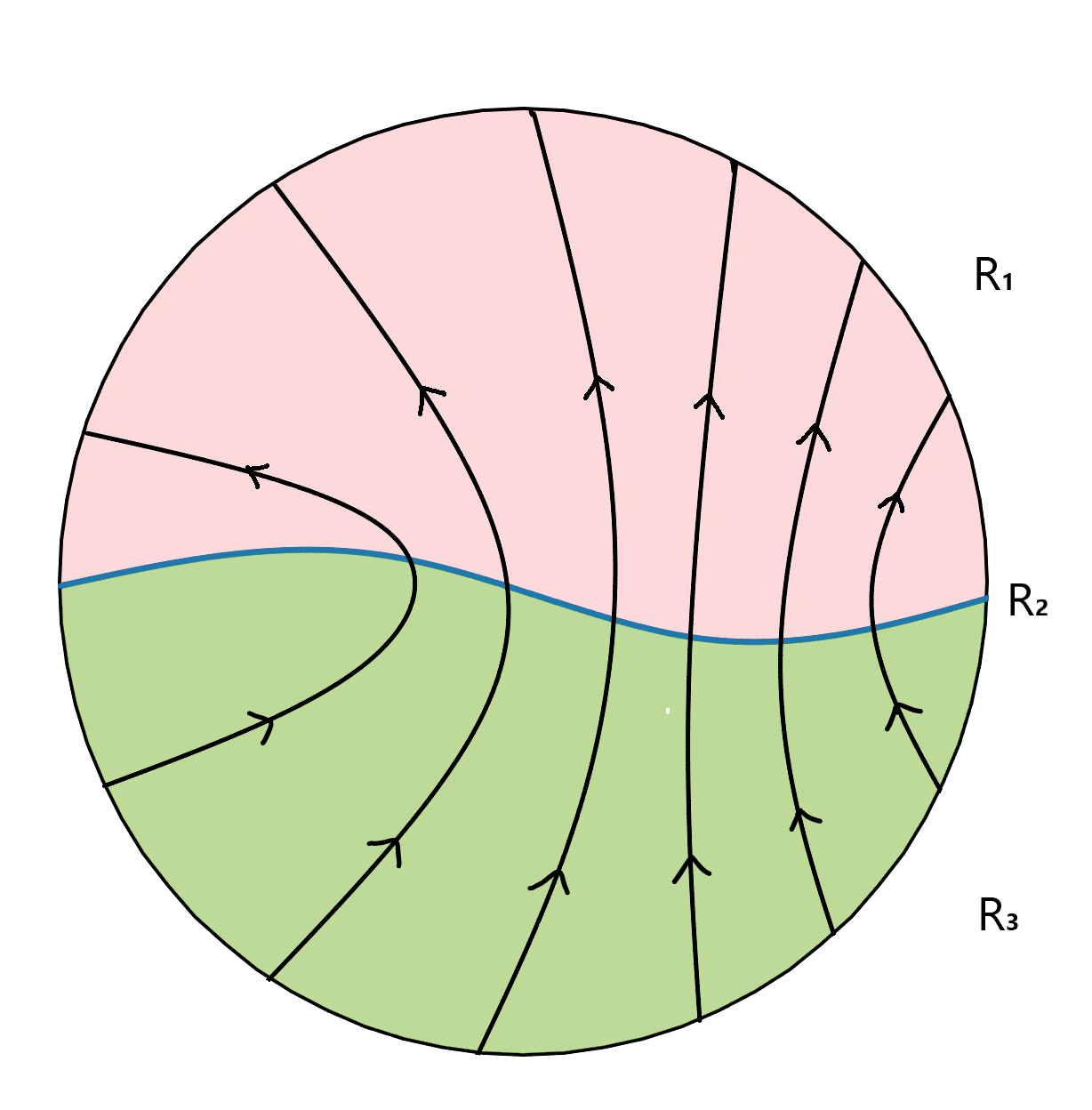}
			\caption{An example of a leaf $L\in\FF$ not contained in $\widetilde{\A}$ or $\widetilde{\R}$. \\In $R_1$ forward rays are asymptotic to $\widetilde{\A}$; in $R_3$ backward rays are asymptotic to $\widetilde{\R}$; $R_2$, the blue line, represents the intersection of $L$ with some lift
				$\widetilde T_i$ of some torus $T_i$. }
			\label{NAR}
		\end{figure}

		\vskip .1in
		\noindent
		{\bf{Construction of $\F$}}
		
		Now we construct the foliation $\F$.
		The foliations $\F^{ws}, \F^{wu}$ 
		are $C^1$ \cite{KH95},  and so are the intersections with each $T_i$.
		On each $T_i$ choose a one dimensional $C^1$ foliation $F_i$
		transverse to both $$\F^{ws} \cap T_i, \ \ \ 
		{\rm and} \ \ \  \F^{wu} \cap T_i.$$
		\noindent
		Saturate $F_i$ by the flow to produce a two dimensional
		foliation in the flow saturation of $T_i$. 
		Let $\F$ be this foliation in the complent of the attractor
		union the repeller.
		Figure \ref{NAR} describes a possible leaf in the lift $\widetilde{\F}$ of $\F$ to $\MM$;
		where $R_2$, the blue line, represents its intersection with some lift of $T_i$. 
		The figure depicts the following
		several properties that we are going to prove
		later and that are essential to the results of this article:
		\ 1) We will show later that leaves of $\FF$ are Gromov
		hyperbolic, \ 2) We will also show that for $L \in \FF$ not in the lift
		of the attractor or repeller then each flow ray in $L$ converges
		to a single point in $S^1(L)$, and that distinct flow rays
		do not forward converge to the same ideal point in $S^1(L)$.
		Similarly for backward flow rays.

		A flow line that does not intersect any $T_i$ has to be either
		in the attractor ($\A$) or the repeller ($\R$). 
		We define $\F$ to be $\F^{wu}$ in the attractor,
		$\F^{ws}$ in the repeller, and the saturation of the $F_i$ 
		everywhere else.
		
		\vskip .1in
		We claim that $\F$ is a foliation. Clearly it is a foliation
		in the complement of the union of 
		the attractor and the repeller, because this is an
		open set and because of the definition of $\F$:
		each component $\CCC$ of $\M \setminus (\A \cup \R)$
		is equal to $\Phi_{\RR}(T_i)$ for some $T_i$ and this is
		homeomorphic to $T_i \times \RR$ with the product topology
		(the topology in $T_i$ is induced from $\M$).
		The foliation $\F$ in $\CCC$ is equivalent to the
		foliation $F_i \times \RR$ in $T_i \times \RR$.
		
		There is a subtle point here. Let $\FF$ be the lift of $\F$ to 
		$\M$. If $\F$ is a foliation, then $\FF$ is a foliation of $\MM$ by
		properly embedded planes. 
		By construction the leaves of $\F$ intersecting the attractor
		are contained in the attractor and similarly for the repeller.
		Therefore the leaves of $\F$ in the complement of $\A \cup \R$
		are entirely contained in the complement of $\A \cup \R$ as well.
		In particular if $L$ is a lift of a
		leaf of $\F$ in the complement of the attractor and repeller, then
		it should be properly embedded in $\MM$. 
		As it turns out this property is not true if one starts with an arbitrary
		foliation $F_i$ in $T_i$. Let us review the construction: we start
		with a foliation $F_i$ in $T_i$ and saturate it by the flow
		to produce a foliation in an open set in $\M$.
		Then consider a lift $L$ of a leaf of this foliation
		to the universal cover. Is $L$ always
		properly embedded in $\MM$? In general this is not true. For example
		start with the Franks-Williams non transitive flow \cite{FW80},
		consider a smooth torus $T$ which separates the attractor
		and repeller and start with say the intersection of the unstable
		foliation of $\Phi$ with $T$, which we call $F$.
		Then for some of the leaves of $F$, it follows that if
		$L$ is a lift of the flow saturation to $\MM$, then $L$ is
		not properly embedded in $\M$. 
		For example  \cite[Fig. 3, page 164]{FW80} depicts the foliations
		induced by the weak stable and unstable foliations in $T$ for
		the Franks-Williams flow. Each has two Reeb components.
		Take $\alpha$ to be a leaf of the unstable foliation which is not
		in the interior of a Reeb component, that is a horizontal
		line in the figure. Lift it to $\widetilde \alpha$ in $\MM$.
		If $C$ is the flow saturation of $\widetilde \alpha$, then 
		$C$ is not properly embedded in $\MM$: there is an orbit $\gamma$ 
		of $\wwp$ which is not in $C$ but is contained in the closure of $C$.
		This orbit $\gamma$ is the lift of a periodic orbit contained in the attractor of the
		Franks-Williams flow.
		
		\vskip .1in
		The reason why our construction of a foliation $\F$ as above
		works is because 
		we start with a foliation $F_i$
		which is transverse to both the stable and unstable foliations
		in $T_i$. 
		We state this as a separate result.
		
		\begin{lemma} \label{lem.emb}
			Let $\ell$ be a leaf of $F_i$ and let $L$ be a 
			lift to $\MM$ of the flow saturation of $\ell$ in 
			$\M$. Then $L$
			is a properly embedded plane in $\MM$.
		\end{lemma}
		
		\begin{proof}{}
			Let $E$ be the flow saturation of $\ell$. Since $\ell$ is smooth
			and the flow is $C^1$ it follows that $E$ is $C^1$. For any
			$x, y$ in $\ell$ if 
			
			$$\Phi_t(x) \ \ = \ \ \Phi_s(y),$$
			
			\noindent
			then $x = y$ and $t = s$, since the component of $\M - (\A \cup \R)$
			containing $\ell$ is homeomorphic to $T \times \RR$ and $\ell$
			is injectively immersed in $T$. Hence $E$ is parametrized as
			$\ell \times \RR$, that is every point $p$ in $E$ can be represented as $(x,t)$ where $x\in \ell$ and $t\in\RR$ .
			
			The Riemannian metric in $\M$ induces a Riemannian metric in $E$
			and a path metric in $E$. We show that with this path metric
			$E$ is complete. In particular this implies that the lift
			$L$ to $\MM$ is a properly embedded plane.
			What we prove is the following:
			
			\begin{claim 0}
				There is $a_0 > 0$ so that any point
				$p = (x, t)$
				in $E$ is the center of a metric disk of radius $a_0$
				in $E$. 
			\end{claim 0}
			
			\begin{proof}
				This is obvious for any point $p$ in $\ell$ or in other words
				if $t = 0$. 
				
				We now prove the claim for $t > 0$ using the unstable foliation.
				The analogous proof shows the result for $t < 0$ using the
				stable foliation.
				The foliation $F_i$ is transverse to both the stable and unstable foliations induced 
				in $T_i$,
				hence uniformly transverse to these
 foliations.
				Given any smoothly embedded curve $\alpha$ in $M$ let 
				$l_u(\alpha)$ be its unstable length: we integrate only the 
				component of the tangent vector in the direction of the 
				unstable bundle. For example if $\alpha$ is contained in a weak
				stable leaf then $l_u(\alpha)$ is zero, while if $\alpha$ is contained
				in a strong unstable leaf then $l_u(\alpha)$ is the same as its
				length under the Riemannian metric of $\M$. In particular if $\alpha$ is a curve not contained in a strong stable or unstable leaf, then original length $l(\alpha)$ is always strictly greater than the unstable length $l_u(\alpha)$. 
				

				By the definition of an Anosov flow, there exist constants $C > 0,\lambda>1$ such that if we flow forward a segment with $t$ amount of time, the new unstable length is at least $C\lambda^t$-times of the original unstable length. 
Hence if we let $a_1 = C$ then for any smooth segment, any flow forward
of that segment has unstable length which is at least $a_1$ times of the original unstable length.
				
				Since $F_i$ is uniformly transverse to 
$\F^{ws}\cap T_i$ by our construction, it follows that
				any point $x$ in $T_i$ is the midpoint of a segment $\beta$ in its leaf
				of $F_i$ of unstable length $2$. For any $t \geq 0$ the unstable
				length of $\Phi_t(\beta)$ is at least $2 a_1$. 
				This constant $a_1$ is defined globally.
				In addition if $v$ is  a non zero
				vector tangent to $\beta$ then $v$ makes a
				definite positive angle with the flow direction. Since flowing
					forward increases the size of unstable vectors more than the
					size of tangent vectors (for $t > t_0 > 0$) it follows that there is a global constant $\theta > 0$
					so that $D\Phi_t v$ also makes an 
angle $> \theta$ with the
					tangent to the flow. 
Consider the infinitesimal arclengths $dt, ds, du$ along the
flow, stable and unstable bundles.
The (non Riemannian) metric 

$$|dt| \ + \ |ds| \ + \ |du|$$

\noindent
is quasicomparable with the Riemannian metric in $M$: there
is $a_2 > 0$ so that the Riemannian length is at least $a_2$ times
the length in this metric.
Consider the following set:

$$A \ \ = \ \ \Phi_{[t-1,t+1]}(\beta)$$

\noindent
for $t \geq 0$.
The segment $\beta$ of $F_i$ is contained in the leaf $E$ of
$\F$. From any point in the boundary of $A$ to $\Phi_t(x)$
along $E$ one has to have at least $a_1$ unstable length,
and flow length of at least $1$.
It follows that there is a global constant
$a_0$ (depending only on $a_1$) so that $A$ contains a disk
in the Riemannian metric, of radius $a_0$ and
centered at $\Phi_t(x)$.

			For $t < 0$ we use the stable foliation and flow backwards
			instead of forwards.

This finishes the proof of the claim.
\end{proof}

				

			The claim shows that $E$ is complete and finishes the proof of
			the lemma.
		\end{proof}

		\noindent
		{\bf {Proof of Theorem \ref{examples}}} $-$ \  
		We consider a foliation $\F$ as constructed
		in the beginning of this section. This object $\F$ is a foliation
		restricted to $\M - (\A \cup \R)$, and this is an open set.
		The leaves of $\F$ in this set lift to properly embedded
		planes in $\MM$ by Lemma \ref{lem.emb}.
		It follows that $\FF$ describes $\MM$ as the disjoin union
		of properly embedded  planes which form a foliation in the complement
		of the lift of the union of the attractor and the repeller.
		
		The only remaining thing to prove is that if a sequence $x_n$ in
		$\M - (\A \cup \R)$ converges to $x$ in $\A \cup \R$ then
		the leaves of $\F$ through $x_n$ converge to the leaf of $\F$ 
		through $x$. Without loss of generality we may assume that $x$ is
		in the attractor.
		
		Let $p_n \in T_i$ so that $x_n$ are in $\Phi_{\RR}(p_n)$.
		There are $t_n \in \RR$ with $x_n = \Phi_{t_n}(p_n)$. Since $x$
		is in the attractor then $t_n$ converges to positive $\infty$.
		The leaf of  $\F$ through $p_n$ is
		the $\Phi$ flow saturation of the leaf of $F_i$ 
		through $p_n$.
		The tangent to this two dimensional
		set through $p_n$ is generated by the Anosov vector field generating
		$\Phi$ and a tangent vector $v$ to $F_i$ at $p_n$. The leaf of $\F$ is $\Phi$-flow
		invariant. Flowing forward, the flow vector remains invariant.
		The vector $v$ is transverse to the weak stable foliation
		and hence it flows more and more to the weak unstable
		direction. So flowing forward these leaves become more and more
		tangent to the $E^0 \oplus E^u$ bundle, and limit to leaves of
		$\F^{wu}$. Since flowing forward limits to the attractor, this shows that the leaves of $\F$ through $x_n$ converge to the leaf
		of $\F$ through $x$. This
		shows that $\F$ defines a foliation.
		This finishes the proof of Theorem \ref{examples}.
	\end{proof}

	We remark that the construction of $\F$ highlights why our methods do not
	work when there are one dimensional basis sets. For simplicity
	suppose that there is a basic set which is a periodic orbit
	$\gamma$.  There is a torus $T$ so that negative saturation
	limits on $\gamma$. If we start with $F$ in $T$ transverse to
	both $\F^{ws} \cap T$ and $\F^{wu} \cap T$ then flowing backwards will make
	it limit to the weak stable leaf of $\gamma$. So the weak
	stable leaf of $\gamma$ is in the collection $\F$ so constructed.
	But there is also a torus $T'$ so that the forward flow
	saturation limits on $\gamma$. The similar argument shows
	that the weak unstable foliation of $\gamma$ also has to be
	in the collection $\F$. Hence the collection $\F$ has sets
	which intersect transversely and cannot be a foliation.

	The structure of the proof of Theorem \ref{main} is as follows:
	We will prove the following properties for such a foliation $\F$:
	\begin{enumerate}

		\item  The flow lines are \textit{leafwise quasigeodesics} in leaves of $\F$, 
		\item Every leaf of $\F$ not contained in $\A$ or $\R$ is a non funnel leaf as in figure \ref{NAR}. 
	\end{enumerate}

	\section{Gromov  hyperbolicity of the Leaves of $\F$}
	
	We will consider a foliation $\F$ as constructed in the previous section.
	
	\par In this section we will show that there exists a Riemannian metric $g$ such that every leaf of the foliation $\F$ is Gromov hyperbolic.
	By \textit{Candel's Uniformization Theorem}, this condition is equivalent to the fact that every holonomy invariant non trivial measure $\mu$ on $\M$ has Euler characteristic $\chi_{\mu}(\M,\F)<0$, which includes the case when there exists no invariant measure. 
	For more details about Euler characteristic see \cite{Can93} or \cite{CC00}. In our context we will
	prove that there is no holonomy invariant transverse measure.
	In fact, under these conditions, 
	Candel proved that there is 
	a metric in $\M$ inducing a smooth metric in the leaves so
	that curvature in each leaf of $\F$ is constant equal to $-1$.
	A precise statements can be found at \cite{Can93},\cite{CC00} or \cite{Cal07}. 
	We call such a metric a \textit{Candel metric}.
	This Candel metric is not smooth in the transverse direction.

	Here is the precise statement on the equivalence of Gromov hyperbolicity of leaves of a foliation and negative Euler charectaristic of a positive invariant measure:

	\begin{proposition}[\cite{Can93}]
		
		Let $(M,F)$ be a compact oriented surface lamination with a Riemannian metric $g$. Then $\chi(M,\mu)<0$ for every positive invariant transverse measure $\mu$ if and only if there is a metric in $M$
		which induces a metric in each leaf of $F$ which makes it 
		into a hyperbolic surface. 
		In particular, this holds true if $M$ has no invariant measure.
	\end{proposition}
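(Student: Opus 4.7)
The plan is to prove both directions of the equivalence separately. For the easy direction, suppose a leafwise hyperbolic metric $g'$ exists. Then given any invariant transverse measure $\mu$, the leafwise Gauss--Bonnet formula of Connes (or, what amounts to the same thing in this setting, the definition of Euler characteristic via integration of the leafwise curvature form against $\mu$) yields
\begin{equation*}
\chi(M,\mu) \ = \ \frac{1}{2\pi}\int_{M} K_{g'} \, d\mu \ = \ -\frac{1}{2\pi}\, \mu(M) \ < \ 0,
\end{equation*}
since $K_{g'} \equiv -1$ leafwise and $\mu$ is nontrivial. This is routine once the Gauss--Bonnet machinery for laminations is in place.

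For the nontrivial direction, I would first apply the classical uniformization theorem leaf by leaf. Each leaf $L$ of $\F$, lifted to its universal cover, is a simply connected Riemann surface (the conformal structure is induced by $g$), hence is conformally equivalent to $S^2$, $\CC$, or $\HH^2$. Thus each leaf is classified as elliptic, parabolic, or hyperbolic in conformal type. The goal is to show that the hypothesis on Euler characteristic forces every leaf to be hyperbolic, and then to upgrade the resulting conformal hyperbolic structure to a Riemannian metric on $M$ which is continuous transversely.

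The main obstacle, and the crux of Candel's argument, lies in ruling out the non-hyperbolic leaves. For this, I would invoke Garnett's theory of harmonic measures together with an Ahlfors--Nevanlinna type dichotomy: if some leaf is parabolic (or elliptic), one produces from it a positive invariant transverse measure $\mu$ whose leafwise isoperimetric behavior forces $\chi(M,\mu) \geq 0$. Concretely, in the parabolic case one exploits the sublinear growth of leafwise balls to extract a Folner-type sequence on which the boundary/area ratio vanishes, giving an invariant measure via the standard averaging argument; in the elliptic case the leaf itself is closed with positive Euler characteristic and the leafwise arclength measure gives a contradiction directly. Either way we contradict the hypothesis, so every leaf is of hyperbolic type.

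Finally, on each leaf $L$ there is a unique conformal factor $u_L$ with $e^{2u_L} g|_L$ of constant curvature $-1$, determined by the semilinear elliptic PDE
\begin{equation*}
\Delta_{g} u_L \ - \ e^{2 u_L} \ = \ -K_{g}|_L.
\end{equation*}
The remaining task is to show that $u := u_L$ assembles into a transversely continuous function on $M$. This is handled by a priori $C^k$ estimates for the PDE on balls of fixed leafwise radius, together with the exhaustion-and-compactness technique: one solves the equation on large compact subdisks with prescribed boundary conditions, obtains uniform estimates depending only on the transverse $C^0$ norm of $g$, and passes to a limit. Continuity of $g$ in the transverse direction then transfers to continuity of $u$, and $e^{2u}g$ is the desired Candel metric. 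In the special case that $M$ admits no invariant transverse measure at all, the hypothesis is vacuously satisfied, and the same construction applies.
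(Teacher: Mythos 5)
This proposition is cited from Candel \cite{Can93} and the paper itself offers no proof of it --- it is invoked as a black box, and the paper's own work consists in verifying that $\F$ admits no holonomy-invariant transverse measure so that the hypothesis is vacuous. There is therefore no proof in the paper to compare against; what follows is an assessment of your sketch as an account of Candel's theorem.

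Your outline captures the broad architecture of Candel's argument: leafwise Gauss--Bonnet for the forward implication, uniformization of each leaf according to its conformal type, exclusion of elliptic and parabolic leaves by producing from them an invariant transverse measure with $\chi(M,\mu)\geq 0$, and assembly of the leafwise conformal factor into a transversely continuous function via elliptic estimates. However, several of your specifics are off. In the forward direction, $\chi(M,\mu)$ is the pairing of the leafwise curvature $2$-form $\frac{1}{2\pi}K\,dA$ with the Ruelle--Sullivan current of $\mu$, so the integral must include the leafwise area form; what you call ``$\mu(M)$'' is really the total mass of $\mu$ combined with leafwise area, not the transverse measure by itself. In the elliptic case, the measure that gives the contradiction is the transverse Dirac (counting) measure concentrated on the closed spherical leaf, not an ``arclength measure.'' Most significantly, ``sublinear growth of leafwise balls'' is not what characterizes parabolic leaves --- the flat plane already has quadratic volume growth --- so this is the wrong criterion. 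What is true, and what Candel actually needs, is that a parabolic leaf of a compact lamination carries a F{\o}lner sequence, i.e.\ an exhaustion by compact subsurfaces $\Omega_n$ with $\mathrm{length}(\partial\Omega_n)/\mathrm{area}(\Omega_n)\to 0$, a consequence of extremal-length (modulus of annuli) estimates; it is this F{\o}lner property, not any volume-growth condition, that one averages over to construct the invariant measure. Finally, check the sign in your Liouville equation: with the analyst's convention for $\Delta$, the condition for $e^{2u}g$ to have curvature $-1$ is $\Delta_g u - e^{2u} = K_g$, not $-K_g$.
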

	
	To prove that all the leaves of $\F$ are hyperbolic, we will show that there does not exist any invariant measure.  We will argue by contradiction, we assume that there exist a invariant measure $\mu$ and we will attain a contradiction.  

	The $support$ of $\mu$ on $\M$, denoted by $supp(\mu)$, is defined as the collection of all points $x\in\M$ such that if $\tau$ is a one dimensional manifold transverse to $\F$ which contains $x$ in its interior then $\mu(\tau)>0$. The support of a holonomy invariant transverse measure is  a closed set and it is saturated by $\F$, which means $supp(\mu)$ is a union of leaves of $\F$. 
	The orientation hypothesis is not essential as it can be achieved
	by a double cover. The double cover does not change the
	conformal type of any leaf.
	
	\begin{lemma}
		The support of $\mu$ on $\M$ contains at least one leaf from the attractor $\A$ or the repeller $\R$.
	\end{lemma}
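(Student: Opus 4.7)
The plan is to exploit the flow-invariance of every leaf of $\F$, together with the attracting/repelling behaviour of $\A$ and $\R$, and then use that $\mathrm{supp}(\mu)$ is simultaneously closed and $\F$-saturated to upgrade a single accumulation point into a full leaf contained in $\A$ or $\R$.

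First I would pick any $x\in\mathrm{supp}(\mu)$, which is non-empty since $\mu$ is non-trivial, and let $L$ denote the $\F$-leaf through $x$. Because $\mathrm{supp}(\mu)$ is $\F$-saturated and closed we immediately have $\overline{L}\subset\mathrm{supp}(\mu)$. If $x$ already lies in $\A$ (respectively $\R$), then by construction $\F|_{\A}=\F^{wu}|_{\A}$ (respectively $\F|_{\R}=\F^{ws}|_{\R}$), and since $\A$ is saturated by weak unstable leaves (respectively $\R$ by weak stable leaves), the leaf $L$ is itself an $\F$-leaf contained entirely in $\A$ (respectively in $\R$), and the lemma is proved.

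Otherwise $x\notin\A\cup\R$, and by Property \ref{conv1} the forward ray of $\Phi_{\RR}(x)$ is asymptotic to some flow line $\alpha\subset\A$, so we can extract $t_n\to\infty$ with $\Phi_{t_n}(x)\to y$ for some $y\in\A$. Since every leaf of $\F$ is $\Phi_{\RR}$-invariant by construction, each $\Phi_{t_n}(x)$ lies in $L\subset\mathrm{supp}(\mu)$, and closedness of the support forces $y\in\mathrm{supp}(\mu)\cap\A$. Applying the previous paragraph to $y$ then produces a full $\F^{wu}$-leaf inside $\A$ sitting in $\mathrm{supp}(\mu)$.

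The argument really is just tracking definitions, so I do not expect a serious obstacle. The only points one must be careful about are that leaves of $\F$ are flow-invariant (built into the construction in Section \ref{s.foliation}) and that $\mathrm{supp}(\mu)$ is both closed and leaf-saturated (which follows from holonomy invariance); both are already available. A symmetric argument using backward asymptoticity would yield a leaf in $\R$ instead, but either conclusion suffices.
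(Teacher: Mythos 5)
Your proposal is correct and follows essentially the same route as the paper's proof: pick a point in the support, dispose of the case where it already lies in $\A\cup\R$, and otherwise flow forward to produce an accumulation point in $\A$, then use closedness and $\F$-saturation of $\mathrm{supp}(\mu)$ to conclude. The only cosmetic difference is that you invoke Property \ref{conv1} to land in the attractor, whereas the paper observes that any $\omega$-limit point is nonwandering and hence in $\A\cup\R$ (and in $\A$ because the flow was taken forward); both yield the same conclusion.
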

	
	\begin{proof} Consider a point $x\in supp(\mu)$ and suppose $L_x$ is the leaf in $\F$ which contains $x$, then $L_x\subset supp(\mu)$ as $supp(\mu)$ is $\F$-saturated. If $x\in \A$, then $L_{x}\subset \A$ and the claim is true. Similarly if $x$ is in $\R$ then its leaf is contained
		in $supp(\mu)$. Finally suppose that
		$x\notin (\A \cup \R)$. Then consider the sequence $\{\Phi_{n}(x)\}$ 
		as $n \rightarrow \infty$.  Let $z$ be an 
		accumulation point of $\{\Phi_{n}(x)\}$. 
		As $supp(\mu)$ is closed, $z$ is in $supp(\mu)$ and hence
		$L_{z}\subset supp(\mu)$. Since $z$ is an accumulation point
		of $\Phi_n(x)$, it implies that $z$ is a non wandering point, hence $z\in\A\cup\R$ and $L_z\subset (\A\cup\R)\cap supp(\mu)$.  
		In fact since $n \rightarrow \infty$, it follows that $z$ is in the 
		attractor, so $L_z \subset \A$.
	\end{proof}
	
	Suppose $L$
	is a leaf in $supp(\mu)$ which is contained in $\A$ (assume in $\A$
	without loss of generality). By 
	\cite[Theorem 6.3]{Pla75}, we know that if $\mu$ is a holonomy invariant 
	transverse measure on a compact manifold foliated by a codimension one foliation $\F$ then any leaf contained in $supp(\mu)$ has polynomial growth. Then the
	leaf $L_z$ in the attractor $\A$ as obtained in
	the previous paragraph has polynomial growth.
	Recall that the leaves of $\F$ are either planes or annuli.
	At the same time, $L_z$ is contained in the attractor and each leaf in the attractor belongs to the weak unstable foliation of the Anosov flow 
	$\Phi$. But weak stable and weak unstable leaves of  Anosov flows have exponential growth, a contradiction. 
	This contradiction shows that 
	each leaf of $\F$ is Gromov hyperbolic.

	\par As each leaf $L\in\FF$ is Gromov hyperbolic with respect 
	to the path metric from the induced Riemannian metric from $\MM$, 
	we can define the circle at infinity or the ideal boundary $S^1(L)$ of each leaf $L$. 
	
	Next we will describe the topology we will use on the spaces 
	
	$$\SC(\MM)=\bigcup\limits_{L\in\FF}\SC(L)\text{ and}$$
	$$\MM \cup \SC(\MM)=\bigcup\limits_{L\in\FF}(L\cup\SC(L))$$
	
	For this we will assume first that $M$ has a Candel metric.
	
	Suppose $\tau$ is an open segment homeomorphic to (0,1) and  transversal to $\FF$. We define the  the following sets $$\mathcal{P}_{\tau}=\bigcup\limits_{y\in\tau}\SC(L_{y})\text{ and }\mathcal{Q}_{\tau}=\bigcup\limits_{y\in\tau}(L_{y}\cup \SC(L_{y}))$$ 
	If $T^{1}(\tau)$ 
	denotes the unit tangent bundle of $\FF$ restricted to $\tau$, then $T^{1}(\tau)$ is naturally homeomorphic to the standard cylinder. The natural identification between $T^{1}(\tau)$ and $\mathcal{P}_{\tau}$ induces the topology on $\mathcal{P}_{\tau}$ homeomorphic to the standard annulus. 
	In \cite{Fen02} 
	it is  proved that this is independent of the
	transversal $\tau$ that is chosen intersecting the same sets of
	leaves of $\FF$.
	This is because the metrics induced in $S^1(L)$ from the visual
	metric in any point are H\"{o}lder equivalent.
	
	Similarly $\mathcal{Q}_{\tau}$ has a natural topology homeomorphic to the standard solid cylinder. 
	
	The collection of all $\mathcal{P}_{\tau}$'s over a $\pi_1(M)$-invariant discrete collection of transversals  defines a topology on $\SC(\MM)$. Similarly the collection of $\mathcal{Q}_{\tau}$s over the same collection
	of transversals defines a topology on $\MM \cup \SC(\MM)$. 
	Deck transformations act by homeomorphisms on both sets.
	For more details see \cite{Fen02}, \cite{Cal00} or \cite{Cal07}.
	
	After the fact it is easy to see that the topologies described are
	independent of the specific metric in $M$ chosen and also work for any 
	Riemannian metric in $M$.

	\section{Properties of flow lines}
	
	\par This section describes the behavior of forward rays of 
	flow lines, in particular their asymptotic behavior towards the
	the boundary at infinity $\bigcup\limits_{L\in\FF}\SC(L)$. 
	In particular we will prove that the rays are quasigeodesics
	in their respective leaves of $\FF$. Notice that this is
	definitely much weaker than saying that full flow lines are
	quasigeodesics in their respective leaves. We will also show
	that in some leaves, the forward ideal points are pairwise
	distinct and the negative ideal points are also pairwise
	distinct. In particular even if the flow foliation is a
	leafwise quasigeodesic subfoliation of $\F$ it will not
	have the funnel property.

	We now introduce a family of sets in $\MM$ which will be 
	extremely useful for us:
	
	\vskip .1in
	\noindent
	{\bf {The sets $\U$}}
	
	\par Fix a point $x\in\widetilde{\A}\subset\MM$ and the forward ray from
	$x$ which is $\gamma^{+}_{x}=\TPhi_{[0,\infty)}(x)$ starting at $x$, let $L_x\subset\widetilde{\A}$ be the leaf containing $\gamma^{+}_{x}$. 
	Recall that in the attractor $\A$ the foliation
	$\F$ is equal to $\F^{wu}$, hence transverse to $\F^{ws}$.
	Therefore the foliations $\FF$ and $\FF^{ws}$ are transversal to each other near $\widetilde{\A}$. 
	
	Let $U$ be a compact rectangle transverse to the flow 
	and with
	$x$ in the interior of $U$. We assume that $U$ is contained
	in foliation boxes of all foliations, 
	that $U$ is made up of a union of 
	stable segments, every one of which intersects the
	local strong unstable segment through $x$.
	Consider the set 
	$$\U \ = \ \TPhi_{[0,\infty)}(U)$$
	
	\noindent
	The set $\U$ is a neighborhood of the forward ray $\TPhi_{[0,\infty)}(x)$. We can assume that $\U$ is homeomorphic to $[-1,1]\times[-1,1]\times[0,\infty)$ with $x=(0,0,0)$ and we can define coordinates on $\U$ such that 
	\begin{itemize}
		\item $U$ is identified with $[-1,1]\times[-1,1]\times\{0\}$ and points on $U$ are represented as $(r,s,0)$ for $r,s\in [-1,1]$. In particular, $x=(0,0,0)$. 
		\item for a point $y=(r,s,0)$, $\TPhi_t(y)$ has coordinates
		$(r,s,t)$, that is, the  ray $\{(r,s,t)|t\in[0,\infty)\}$ represents the ray $\Phi_{[0,\infty)}(y)$.
		\item for a point  $y'=(r',s,t')\in \U$, ${P_{y'}}$ denotes the horizontal infinite strip  $$P_{y'}=\{(r,s,t)|r\in[-1,1],t\in[0,\infty)\}$$ The infinite strip  $P_{y'}$ is contained in the leaf $L_{y'}\in\FF$ which contains $y'$.   
		\item for a point $y'=(r,s',t')\in \U$, ${Q_{y'}}$ denotes the vertical infinite strip  $$Q_{y'}=\{(r,s,t)|s\in[-1,1],t\in[0,\infty)\}.$$ The infinite strip  $Q_{y'}$ is contained in the leaf $E_{y'}\in\FF^{ws}$ which contains $y'$. 
	\end{itemize}
	As $x=(0,0,0)\in\widetilde{\A}$ 
	the leaf of $\FF$ through $x$ is actually the weak unstable leaf of
	$\widetilde \Phi$ through $x$, hence 
	$P_{x}$ is contained in the $\FF^{wu}$ leaf through $x$.
	
	\vskip .1in
	The sets $\U$ will be used throughout this section.
	
	We can define a projection map $\Pi: \U \rightarrow P_x$  by
	the formula $\Pi(y)=S_{y} \cap P_{x}$ where $S_{y}$ is the one dimensional leaf of the strong stable foliation $\FF^{s}$ containing $y$. This is possible because
	one can do that in the original rectangle $U$ as it is a union
	of strong stable segments, and then $\U$ is the flow forward saturation
	of $U$ and the maps $\widetilde \Phi_t$ preserve the strong
	stable foliation in $\widetilde \M$.
	These projection maps are well defined and continuous because of the foliation structures on $\U$.

	For any $y\in\U$, the rays $\widetilde{\Phi}_{[0,\infty)}(y)$ and $\widetilde{\Phi}_{[0,\infty)}(\Pi(y))$ are asymptotic as they lie on the same weak stable leaf. We assume that lengths of all the line segments $\{(r,s,0)|s\in[-1,1]\}$ are less than a fixed $\epsilon>0$ in $\MM$. 
	
	The line segment $\lambda=\{(0,s,0)|s\in[-1,1]\}$ is transversal to $\FF$. Consider the open sets $\mathcal{V}=\bigcup\limits_{x'\in\lambda}\SC(L_{x'})$ and $\mathcal{W}=\bigcup\limits_{x'\in\lambda}(L_{x'}\cup\SC(L_{x'}))$.
	
	\begin{definition} Let $\gamma$ be a flow line of $\widetilde \Phi$
		contained in a leaf $L$ of $\FF$. Given $x$ in $\gamma$ if the
		forward ray converges to a single point of $S^1(L)$ we let this
		be $\eta^+(x)$. Similarly define $\eta^-(x)$.
		In addition given a point $a$ in $\MM$ let $\gamma_a$ be
		the flow line of $\wwp$ containing $a$.
	\end{definition}

	\begin{lemma}
		For any $w\in\MM$, the forward and the backward rays of the flow line $\gamma_{w}= \widetilde \Phi_{\RR}(w)$ are quasigeodesics on the leaf $L_w$ in $\FF$ which contains the flow line.
	\end{lemma}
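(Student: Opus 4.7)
The strategy is to first reduce to the generic case and then use the strong stable projection $\Pi$ to compare with a known quasigeodesic. If $w \in \widetilde \A$, then by construction $\F = \F^{wu}$ on $\A$, so $L_w$ equals the weak unstable leaf through $w$ and Lemma \ref{lqg} directly gives that $\gamma_w$ is a quasigeodesic in $L_w$. The case $w \in \widetilde \R$ is symmetric, using $\FF^{ws}$. We may therefore assume $w \notin \widetilde \A \cup \widetilde \R$, and by the symmetry between attractor and repeller (equivalently, replacing $\Phi$ by $\Phi_{-t}$), it suffices to treat the forward ray.

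For the forward ray, Property \ref{conv1} produces a flow line in $\A$ to which the projection of $\widetilde \Phi_{[0,\infty)}(w)$ is forward asymptotic in $\M$. Lifting, I would choose a point $x \in \widetilde \A$ so that for all sufficiently large $t$ one has $\widetilde \Phi_t(w) \in \U$, where $\U$ is the product neighborhood around $\widetilde \Phi_{[0,\infty)}(x)$ constructed above. Since dropping a compact initial segment of the ray does not destroy the quasigeodesic property, we may further assume $\widetilde \Phi_{[0,\infty)}(w) \subset \U$, so the forward ray lies inside the strip $P_w = L_w \cap \U$. The strong stable projection $\Pi: \U \to P_x$ commutes with the flow and sends this ray to $\widetilde \Phi_{[0,\infty)}(\Pi(w)) \subset P_x$, which is a quasigeodesic in the weak unstable leaf of $x$ by Lemma \ref{lqg}.

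The main technical step, and the expected obstacle, is transferring the quasigeodesic property from $P_x$ back to $L_w$. My plan is to show that $\Pi: P_w \to P_x$ is a quasi-isometry of the induced path metrics: a point $y \in P_w$ and its image $\Pi(y) \in P_x$ lie on a common strong stable leaf whose ambient distance in $\MM$ decays exponentially in forward flow time, while $\Pi$ preserves the flow direction exactly. Thus $\Pi$ is bi-Lipschitz up to a uniformly bounded additive error, so the quasigeodesic estimate for the flow line in $P_x$ lifts to a linear lower bound on the $P_w$-distance between any two points on the forward ray. To upgrade this to a lower bound on the full $L_w$-distance, I would argue that any path in $L_w$ joining two points of the ray that exits $\U$ must cross the transverse side-boundary strips of $\U$ twice, each crossing contributing a bounded additive cost that can be absorbed into the quasigeodesic constants. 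This yields the quasigeodesic property for the forward ray in $L_w$; the backward ray is handled by the symmetric argument at the repeller, using the strong unstable foliation and a weak stable leaf in $\widetilde \R$.
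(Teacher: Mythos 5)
Your reduction and setup match the paper's: reduce to rays asymptotic to the attractor, localize inside $\U$, and exploit that $\Pi$ sends the ray to a quasigeodesic in $P_x \subset L_x$. The gap is in the last step, and it is exactly the ``subtle point'' the paper flags explicitly.

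To prove the quasigeodesic property in $L_w$ you need a \emph{lower} bound on $d_{L_w}(y_1,y_2)$ for $y_1,y_2$ on the forward ray, i.e.\ a lower bound on the length of \emph{every} path in $L_w$ joining them. Your map $\Pi$ is only defined on $\U$, so it controls paths that stay in $P_w=L_w\cap\U$. For a competitor path that leaves $P_w$, your argument is that crossing the ``transverse side-boundary strips of $\U$ twice'' contributes a bounded additive cost. But this says nothing about the portion of the path \emph{outside} $\U$; a priori that portion could be a shortcut making the total length much smaller than the $P_w$-path length, and that is precisely what must be ruled out. If you try to bound the outside portion by the distance along the boundary ray of $P_w$ between the exit and re-entry points, you are invoking the quasigeodesic property of that boundary flow ray --- which is what you are trying to prove --- so the argument becomes circular. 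The paper resolves this with a convexity argument: working in a Candel metric, it chooses $x_1,x_2\in I$ so that the geodesic $\beta_x$ of $L_x$ with ideal endpoints $\eta^+(x_1),\eta^+(x_2)$ lies in the interior of the flow-forward region $A$, lets $\beta'$ be the nearby curve in $L_w$ (small geodesic curvature, hence close to an actual geodesic $\beta_w$ of $L_w$), and takes the half plane of $L_w$ bounded by $\beta_w$. Since that half plane is (nearly) convex, length-minimizing paths in $L_w$ between far-out points on $r_w$ stay inside it, and only then can one compare to $L_x$. Without this (or an equivalent) convexity input, your upgrade from $P_w$-path-distance to $L_w$-distance does not go through.

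A secondary, smaller issue: your claim that $\Pi$ is ``bi-Lipschitz up to a uniformly bounded additive error'' is justified only by the pointwise exponential contraction of strong stable segments. Pointwise closeness of $y$ and $\Pi(y)$ does not by itself control the derivative distortion; you would additionally need that the tangent planes to $L_w$ along the forward ray converge to those of $L_x$ (which does hold here because $\FF$ is transverse to $\FF^{ws}$ and flowing forward pushes the tangent planes toward $E^0\oplus E^u$), plus regularity of the strong stable holonomy. The paper sidesteps this by not using $\Pi$ as a metric comparison at all; instead it compares lengths of nearby curves in $P_x$ and $P_w$ directly via continuity of the leafwise Candel metrics.
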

	
	\begin{proof}
		\begin{figure}
			\centering
			\includegraphics[width=0.65\linewidth]{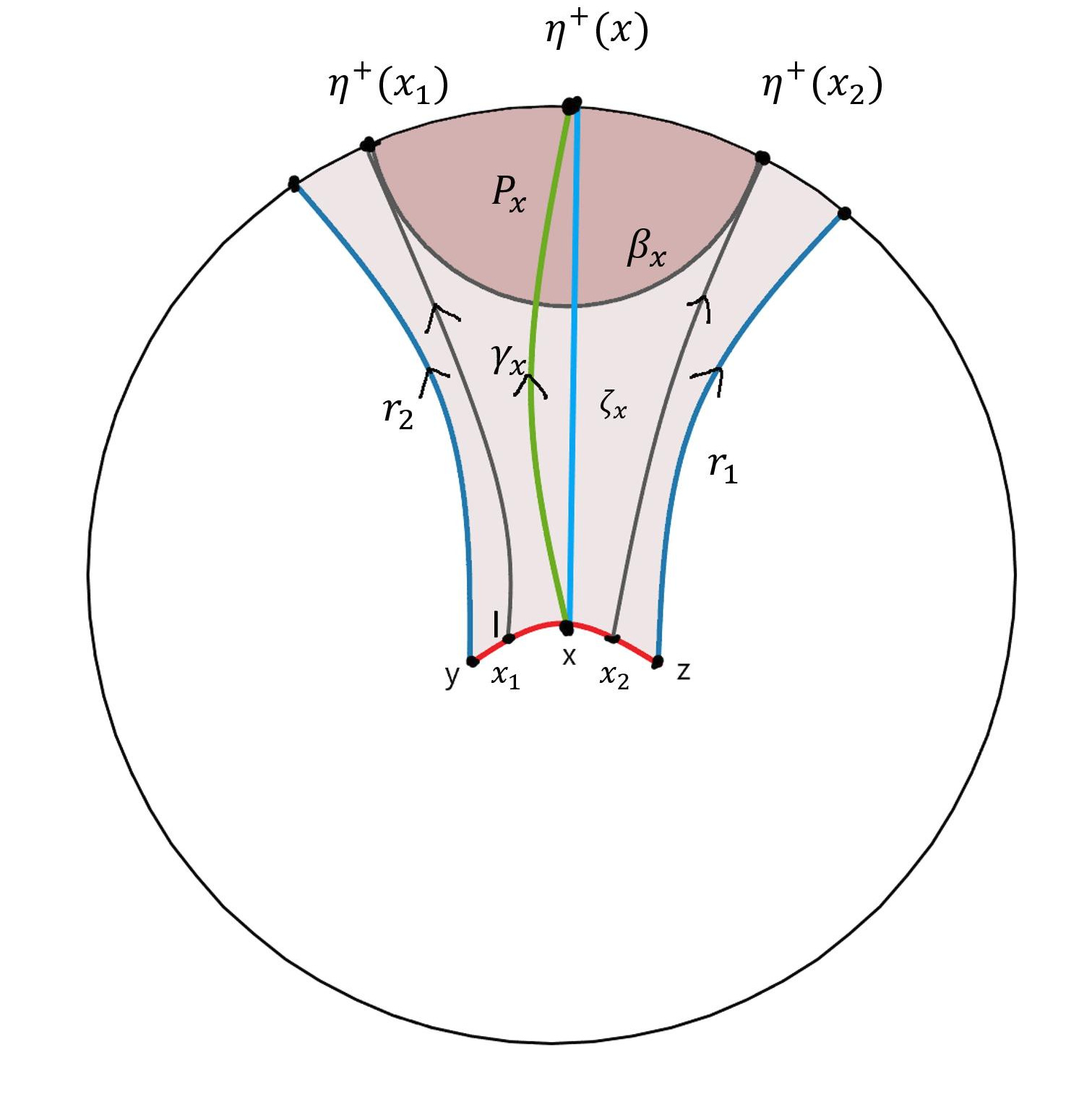}
			\caption{the region $A_x$ in $L_x$ and the half-space $P_x$}
			\label{fig1}
		\end{figure}
		
		To prove this Lemma we assume a Candel metric in
		$\M$ so that
		leaves of $\F$ are hyperbolic surfaces. This metric is not Riemannian,
		but the result is independent of the metric.
		
		By lemma \ref{lqg} every forward or backward ray in a leaf  in $\FF^{wu}$ or $\FF^{ws}$ is quasigeodesic in its respective leaf. 
		In particular every flow line is a quasigeodesic in the respective
		leaf of $\F$ if contained in the attractor or repeller.
		
		So we may assume that the ray is in a leaf not in the attractor or
		repeller. Property \ref{conv1} shows that the ray is asymptotic
		with a ray in the attractor. By taking a subray we may assume that
		the ray is in the weak stable leaf of a point $x$ in the attractor and 
		the initial point $w$ of the ray is very near $x$. Hence we may assume that the initial
		point is contained in a local cross section $U$ 
		to $\widetilde \Phi$ centered at $x$ as described above.
		Let $L_x$ be the leaf of $\FF$ containing $x$, and similarly
		define $L_w$.
		
		Recall that $L_x$ is also  the weak unstable leaf of $\widetilde \Phi$
		containing $x$.
		
		Therefore it is
		sufficient to show that every forward ray in the set $\U$ described
		above  is quasigeodesic in its respective leaf of $\FF$.
		In the $\FF$ leaf through $x$ we consider the following curve.
		Let $I$ be the compact unstable segment $U \cap L_x$ which has
		endpoints $z, y$. Let $r_1, r_2$ be the forward rays of 
		$\wwp$ through
		$z_,y$. Then $c := r_1 \cup I \cup r_2$ is a bi-infinite curve as shown in figure \ref{fig1}.
		The two rays $r_1, r_2$ are quasigeodesics in $L_x$ and
		they converge to distinct ideal points in $S^1(L)$.
		
		The curve $c$ bounds a region $A$ in $L_x$ (as in figure \ref{fig1}) which is exactly
		$\widetilde \Phi_{[0,\infty)}(I)$. This is contained in $\U$.
		This region contains a half plane in $L_x$.
		
		Recall that we are considering $w$ a point in 
		$U \cap \FF^s(x)$, where $\FF^s(x)$ is the strong stable leaf of $x$. Let $J$ be the intersection
		of $L_w \cap U$, where $L_w$ is the leaf of $\FF$ through $w$.
		Then $B := \widetilde \Phi_{[0,\infty)}(J)$ is contained in 
		$L_w$ and contained in $\U$.
		In addition since every point in $J$ is in the strong stable leaf
		of a point in $I$, it follows that every flow ray in $B$ is
		asymptotic to a flow ray in $A$. In fact as points leave 
		compact sets in $B$ they  become closer and 
		closer to $A$.

		The induced metrics on the leaves $\FF$ vary continuously and the ray
		$r_x = \widetilde  \Phi_{[0,\infty)}(x)$ is quasigeodesic in its leaf
		and asymptotic to the ray 
		$r_w = \widetilde  \Phi_{[0,\infty})(w)$, it follows that the other ray is 
		also a quasigeodesic in its $\FF$ leaf.

		Since this is a very subtle point we provide specific details.
		In the leaf $L_x$ choose two points $x_1, x_2$ in $I$ with $x$ in 
		between them so that the geodesic $\beta_x$ in $L_x$ with ideal
		points $\eta^+(x_1), \eta^+(x_2)$ is contained in the interior
		of $A$. This is possible since the flow lines in $L_x$ are
		uniform quasigeodesics and they spread out in the forward direction.
		We stress that in general it is not possible to choose $x_1, x_2$ as
		the endpoints of $I$ as the flow lines are only quasigeodesics and not
		geodesics in $L_x$.
		Let $P_x$ be the half plane of $L_x$ bounded by $\beta_x$ and containing
		a forward ray from $x$.
		We also may assume that every point in $P_x$ is $\epsilon_1$ 
		close to $L_w$ with $\epsilon_1$ 
		very close to zero.
		Then $\beta_x$ is $\epsilon_1$ close to a curve $\beta'$ in $L_w$
		which has geodesic curvature very close to zero. 
		To obtain this property of $\beta'$ with small geodesic curvature
		in $L_w$ was one of the reasons to choose a Candel metric with 
		hyperbolic leaves varying continuously.
		In particular
		this curve $\beta'$ is very close in $L_w$ to an actual geodesic 
		in $L_w$, and this geodesic is  denoted by $\beta_w$. Let $P_w$ be the half plane
		of $L_w$ which is very close to $P_x$ as shown in figure \ref{fig2}.
		\begin{figure}
			\centering
			\includegraphics[width=1\linewidth]{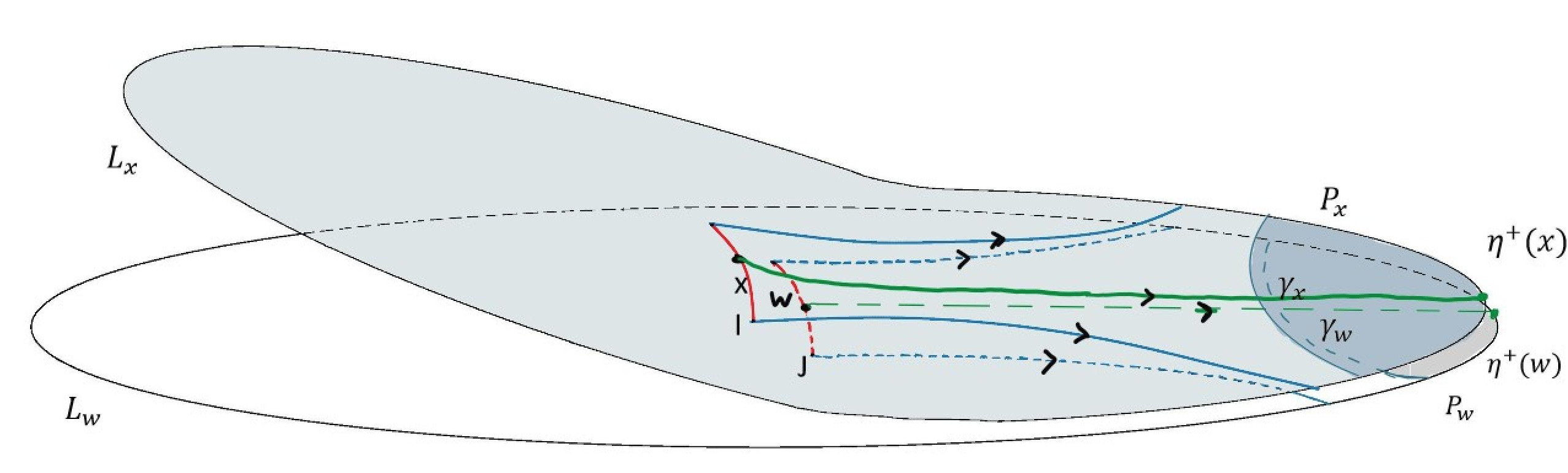}
			\caption{$P_x$ in $L_x$ is asymptotic to $P_w$ in $L_w$}
			\label{fig2}
		\end{figure}

		Now we prove that the ray $r_w$ is quasigeodesic in $L_w$.
		The ray $r_x$ in $L_x$ satisfies the quasi-isometric
		property for some $(K,s)$. The ray $r_w$ is asymptotic to $r_x$ so
		length along $r_w$ is extremely well approximated by length along
		$r_x$ when moving forward. But distance in $L_w$ between
		points in $r_w$ is also boundedly approximated
		by the corresponding distance in $L_x$.
		This is because if one gets a length minimizing path
		in $L_w$ connecting the endpoints of a segment in $r_w$, 
		then this segment is contained in the half plane $P_w$ as above
		if the points are far enough in $L_w$ from $w$.
		This is why we constructed $P_w$. Since $P_w$ is $\epsilon_1$ 
		close to $P_x$ there is a corresponding segment in $P_x$
		whose length is multiplicatively increased by at most a factor
		of $1 + \epsilon_2$, where $\epsilon_2$ is very small.
		But the approximating segment in a flow line in $A$ is a
		quasigeodesic in $L_x$ with constants $(K,s)$, hence the length
		of the approximating path is bounded below, and so
		is the length of the length minimizing original path in $P_w$.
		
		This proves that $r_w$ is a quasigeodesic in $L_w$.
		
		If we reverse the flow every backward ray becomes forward ray, hence leafwise quasigeodesic.  
	\end{proof}
	
	By compactness and continuity there is $K_0,s_0$ so that given
	any ray in a flow line, there is a subray of it that is
	a $(K_0,s_0)$ quasigeodesic in its leaf of $\FF$.
	As the flow rays are quasigeodesics they define unique points on the ideal boundaries.
	
	In the next proposition we consider the sets $P_y$ contained in $\U$.
	
	\begin{proposition}\label{nf}
		Suppose $a,b\in P_y$ but $\gamma_{a}\neq \gamma_{b}$, then $\eta^{+}(a)\neq\eta^{+}(b)$ in $S^1(L_y)$. 
	\end{proposition}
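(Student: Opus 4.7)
The plan is to project the two points $a, b \in P_y$ via the strong-stable projection $\Pi : \U \to P_x$ into the weak unstable leaf $L_x$, where Property \ref{2.4} gives the conclusion for free, and then transfer the resulting divergence from $L_x$ back to $L_y$ using the same half-plane comparison argument used in the proof of the preceding lemma.

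In coordinates on $\U$, write $a = (r_1, s, t_1)$ and $b = (r_2, s, t_2)$; since $\gamma_a \ne \gamma_b$ but both lie in $P_y$, we must have $r_1 \ne r_2$. The projections $\Pi(a), \Pi(b)$ lie in $P_x \subset L_x$ on distinct flow lines, and the forward $\TPhi$-rays of $a$ and $\Pi(a)$ (resp.\ $b$ and $\Pi(b)$) are asymptotic in $\MM$ because they share a strong stable leaf. Since $x \in \widetilde{\A}$, the leaf $L_x$ is actually a leaf of $\FF^{wu}$, and Property \ref{2.4} then gives $\eta^+(\Pi(a)) \ne \eta^+(\Pi(b))$ in $S^1(L_x)$. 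In particular, for any basepoint in $L_x$, the intrinsic distance $d_{L_x}\bigl(\TPhi_T(\Pi(a)), \TPhi_T(\Pi(b))\bigr) \to \infty$ as $T \to \infty$.

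To transfer this divergence from $L_x$ to $L_y$, I would mimic the half-plane setup in the proof of the previous lemma: pick $T_0$ large so that the forward rays from $\TPhi_{T_0}(\Pi(a))$ and $\TPhi_{T_0}(\Pi(b))$ both lie in the interior of a half plane $P \subset L_x$ bounded by a geodesic $\beta_x$, and so that the corresponding strong-stable-translate of $\beta_x$ in $L_y$ is $\epsilon_1$-close to an actual geodesic $\beta_y$ bounding a half plane $P' \subset L_y$ which contains all sufficiently forward points of the rays $r_a, r_b$. Any length-minimizing path in $L_y$ connecting $\TPhi_T(a)$ and $\TPhi_T(b)$ for $T$ large is then trapped in $P'$, and pushing it along the strong stable foliation yields a path in $P \subset L_x$ connecting points $\epsilon_1$-close to $\TPhi_T(\Pi(a))$ and $\TPhi_T(\Pi(b))$, whose length is multiplicatively at most $(1+\epsilon_2)$ times the original. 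Hence
\[
d_{L_y}\bigl(\TPhi_T(a), \TPhi_T(b)\bigr) \;\ge\; (1+\epsilon_2)^{-1} d_{L_x}\bigl(\TPhi_T(\Pi(a)), \TPhi_T(\Pi(b))\bigr) - C
\]
for a uniform $C$, and the right-hand side tends to $\infty$. Combined with the $(K_0, s_0)$-quasigeodesic property of forward rays in $L_y$ established earlier, this forces $\eta^+(a) \ne \eta^+(b)$ in $S^1(L_y)$.

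The main obstacle is the trapping step: one has to argue that for $T$ sufficiently large the length-minimizing path in $L_y$ between $\TPhi_T(a)$ and $\TPhi_T(b)$ cannot escape the half plane $P'$, so that the push-forward via strong stable leaves actually lands inside $P \subset L_x$ and produces a valid length comparison. The previous lemma's argument verified an analogous trapping statement for a single flow ray relative to a single reference ray; here one needs the analogue for a pair of rays, exploiting that the Candel metric varies continuously transversely and that $P' \subset L_y$ is uniformly $\epsilon_1$-close to $P \subset L_x$ throughout the forward direction.
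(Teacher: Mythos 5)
Your plan uses exactly the same key ingredients as the paper: the strong-stable projection $\Pi$ into $P_x \subset L_x$, asymptoticity of each ray with its projection (since they share a weak stable leaf), the fact that $L_x$ is a weak unstable leaf so Property \ref{2.4} applies there, and a transfer of distance estimates between $L_x$ and $L_y$. However, the paper argues by contradiction, and that direction is materially simpler. Assuming $\eta^+(a) = \eta^+(b)$, the two rays are quasigeodesics in $L_y$ with the same ideal point, so one can extract points $p_i \in \wwp_{\zein}(a)$ and $q_i \in \wwp_{\zein}(b)$ escaping forward with $d_{L_y}(p_i, q_i) < d_0$. Projecting to $p'_i, q'_i$ on the rays of $\Pi(a), \Pi(b)$ with ambient distance $\to 0$, the local product structure of $\F$ then directly gives $d_{L_x}(p'_i, q'_i) < d_0 + 1$ for $i$ large. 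Hence $\eta^+(\Pi(a)) = \eta^+(\Pi(b))$, contradicting Property \ref{2.4}. The crucial point is that the geodesic arcs being pushed across leaves are already known to be short (length $< d_0$) and located far forward, so they automatically lie in the region where $L_y$ and $L_x$ are uniformly close; no half-plane trapping is needed at all.

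Your direct version, transferring \emph{diverging} distances from $L_x$ to $L_y$, runs into precisely the obstacle you flag, and it is a genuine one. To trap a length-minimizing path in $L_y$ between $\TPhi_T(a)$ and $\TPhi_T(b)$, you want a half plane $P'$ bounded by a geodesic $\beta_y$ that contains the tails of \emph{both} rays. But $\eta^+(\Pi(a))$ and $\eta^+(\Pi(b))$ are distinct in $S^1(L_x)$, and can a priori be far apart on the circle; a geodesic $\beta_x$ whose half plane contains both tails may then be forced close to the initial segment $I$, where $P$ and $P'$ are \emph{not} $\epsilon_1$-close. The previous lemma only needed a half plane around a single ray, for which $\beta_x$ can be pushed arbitrarily far forward (into the region of uniform closeness); your two-ray version does not have that freedom. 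Said differently: your setup tacitly needs to control where the two ideal points $\eta^+(a), \eta^+(b)$ land on $S^1(L_y)$ in order to build $P'$, which is circular given the statement being proved. Restructuring as a contradiction, as the paper does, makes the trapping issue evaporate, because then the path is short and far forward by hypothesis; your remaining steps then reduce to the paper's argument.
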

	\begin{proof} 
		By the previous lemma we already know that all rays are quasigeodesics
		in their respective leaves.
		We do the proof by contradiction and assume that $\eta^+(a) = \eta^+(b)$.
		Since the rays $\wwp_{\zein}(a), \wwp_{\zein}(b)$ are quasigeodesics
		in $L_y$ and by assumption they have the same ideal point in $S^1(L_y)$, the
		following happens:
		there is $d_0 > 0$ and points $p_i, q_i$ in 
		$\wwp_{\zein}(a), \wwp_{\zein}(b)$ respectively, escaping in
		the rays and so that $d_{L_y}(p_i,q_i) < d_0$.
		Consider the points $\Pi(a)$ and $\Pi(b)$ on $P_x$. 
		Since 
		$$\wwp_{\zein}(a), \  \wwp_{\zein}(\Pi(a))$$ 
		
		\noindent
		are asymptotic
		in the weak stable leaf of $\wwp$ in $\MM$,
		there are $p'_i$ in $\wwp_{\zein}(\Pi(a))$ with
		$d(p_i,p'_i) \rightarrow 0$.
		Here $d$ is ambient distance in $\MM$.
		Similarly there are $q'_i$ in $\wwp_{\zein}(\Pi(b))$ with
		$d(q_i,q'_i) \rightarrow 0$. 
		By the local product structure of the foliation $\F$ it follows
		that $d_{L_x}(p'_i,q'_i) < d_0 + 1$ for $i$ sufficiently big.
		Therefore the rays $\wwp_{\zein}(\Pi(a))$, \ $\wwp_{\zein}(\Pi(b))$
		converge to the same ideal point in $S^1(L_x)$.
		Here we are using that $L_x$ is also a weak unstable leaf of $\wwp$
		and the flow lines are quasigeodesics in the weak unstable leaves
		of $\wwp$.
		But the flow lines $\wwp_{\RR}(\Pi(a)), \wwp_{\RR}(\Pi(b))$ are
		distinct  flow lines in $L_x$. Again by the description \ref{2.4} of
		ideal points of flow lines in weak unstable leaves, the forward
		limit points are distinct, that is,
		
		$$\eta^+(\Pi(a)) \ \neq \ \eta^+(\Pi(b)) \ \ \ 
		{\rm in} \ \ \ S^1(L_x) $$
		
		\noindent
		This is a contradiction and shows that $\eta^+(a) \neq \eta^+(b)$
		in $S^1(L_y)$.
	\end{proof}
	
	\begin{lemma} \label{hausdorff}
		In each leaf $L$ of $\FF$ the leaf space of the flow foliation is Hausdorff
		and homeomorphic to the reals $\RR$.
	\end{lemma}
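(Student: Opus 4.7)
The approach is to split into two cases according to whether $L$ is contained in $\widetilde{\A} \cup \widetilde{\R}$, and in each case to exhibit an explicit global cross-section to the flow in $L$ that is homeomorphic to $\RR$ and meets every flow line exactly once. Such a cross-section identifies the leaf space of the flow foliation on $L$ with the cross-section itself, immediately giving both Hausdorffness and the homeomorphism type.

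If $L \subset \widetilde{\A}$, then by construction of $\F$ (Section~\ref{s.foliation}) $L$ is a leaf of $\FF^{wu}$. For any $p \in L$, the strong unstable leaf $\widetilde{W}^u(p) \subset L$ is a smooth embedded copy of $\RR$ which is transverse to the flow, and the standard product decomposition of a weak unstable leaf makes the map $\widetilde{W}^u(p) \times \RR \to L$, $(q,t) \mapsto \wwp_t(q)$, a homeomorphism. Hence every orbit in $L$ meets $\widetilde{W}^u(p)$ exactly once, and the leaf space is homeomorphic to $\widetilde{W}^u(p) \cong \RR$. The case $L \subset \widetilde{\R}$ is symmetric, using the strong stable foliation.

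If $L \not\subset \widetilde{\A} \cup \widetilde{\R}$, then by the construction of $\F$, $L$ is the $\wwp$-saturation of a connected component $\widetilde{\ell}$ of the preimage in a single lift $\widetilde{T}_i$ of a leaf of the transverse foliation $F_i$ on some torus $T_i$. The minimality of the collection $\{T_i\}$ from the proof of Theorem~\ref{t.existence} guarantees that every orbit of $\Phi$ in $M \setminus (\A \cup \R)$ meets $\bigcup_i T_i$ in exactly one point. Since $\widetilde{\A}$ and $\widetilde{\R}$ are $\wwp$-invariant and closed, any flow line in $L$ passes through $\widetilde{T}_i \subset \MM \setminus (\widetilde{\A} \cup \widetilde{\R})$, hence lies entirely in $\MM \setminus (\widetilde{\A} \cup \widetilde{\R})$ and projects to an orbit of $\Phi$ in $M \setminus (\A \cup \R)$; lifting the uniqueness statement, it intersects $\widetilde{T}_i$ in exactly one point. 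A short argument identifies $L \cap \widetilde{T}_i$ with $\widetilde{\ell}$, so the flow map $\widetilde{\ell} \times \RR \to L$ is a homeomorphism. Lemma~\ref{lem.emb} says $L$ is a properly embedded plane, so $\widetilde{\ell} \times \RR \cong \RR^2$ forces $\widetilde{\ell} \cong \RR$, and the leaf space is homeomorphic to $\RR$.

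\textbf{Main obstacle.} The most delicate point is the single-intersection property of flow lines with $\widetilde{T}_i$ in the second case; this relies crucially on the careful minimality of the chosen separating tori together with our standing assumption that $\Phi$ has no one-dimensional basic set (without which orbits could spiral toward a periodic orbit and meet $T_i$ infinitely often). Once this cross-section is in place, the rest is a topological unpacking using Lemma~\ref{lem.emb}, and Hausdorffness follows automatically from the identification of the leaf space with $\RR$.
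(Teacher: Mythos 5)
Your proposal is correct and follows essentially the same route as the paper: split into leaves inside $\widetilde{\A}\cup\widetilde{\R}$ (where the statement is classical for weak stable/unstable leaves of Anosov flows) and leaves outside, and for the latter use the fact that $L$ is the flow saturation of the curve $L\cap\widetilde{T}_i$, which meets each orbit exactly once by the minimality of the separating tori and hence is a global cross-section parametrizing the flow-line leaf space. The paper phrases the single-intersection property via transversality of $\beta=L\cap\widetilde{T}_i$ to $\FF^{ws}$ and $\FF^{wu}$ and cites \cite{Fen94} for the attractor/repeller case rather than spelling out the product structure, but these are only presentational differences.
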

	
	\begin{proof}
		In the lifts of leaves in the attractor and repeller this is obvious since
		the foliation by flow lines satisfies this property 
		in weak stable and weak unstable
		leaves of Anosov flows \cite{Fen94}.
		Any other leaf $L$ of $\FF$ is the lift of a leaf of $\F$ which
		intersects a torus $T$ 
		from the collection of tori $\{T_i\}$ which separates $\A$ and $\R$. Hence $L$ intersects a lift $\wT$ of $T$ in
		a curve $\beta$. The flow saturation of $\beta$ is exactly
		$L$, since every flow line in $M$ is either in the attractor
		or repeller; or
		intersects a torus in $\{T_i\}$.
		The curve $\beta$ is transverse to the weak stable and weak 
		unstable foliations, hence intersects a flow line exactly once.
		Hence $\beta$ parametrizes the set of center leaves in
		$L$. This proves the result.
	\end{proof}
	
	For each $L$ of $\FF$  the map $\eta^+$ induces a map from the flow foliation
	leaf space in $L$ (which is $\cong \RR$) to $S^1(L)$. Since
	flow lines are disjoint in their leaves, 
	this map is weakly monotone.

	\begin{corollary} \label{c.different}
		For all $y\in \U$, $\eta^{+}(y)\neq\eta^{-}(y)$ in $S^1(L_y)$.
	\end{corollary}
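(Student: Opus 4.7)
My plan is to argue by contradiction, combining the Jordan curve theorem in the ideal compactification $L_y\cup S^1(L_y)$ with Proposition \ref{nf}.

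Suppose for contradiction that $\eta^{+}(y)=\eta^{-}(y)=p$ for some $p\in S^1(L_y)$. By the preceding lemma both the forward ray $\wwp_{[0,\infty)}(y)$ and the backward ray $\wwp_{(-\infty,0]}(y)$ are quasigeodesic rays in $L_y$, hence both converge to $p$ in $L_y\cup S^1(L_y)\cong D^2$. Parametrizing $\gamma_y$ by $\RR$ via the flow and sending $\pm\infty$ to $p$ produces a continuous map from $S^1$ into $D^2$; this map is injective since $\gamma_y$ is a flow line (with no self-intersections) and since $p\in S^1(L_y)$ is disjoint from $\gamma_y\subset L_y$. The image $\gamma_y\cup\{p\}$ is therefore an embedded Jordan curve in $D^2$ which meets $\partial D^2=S^1(L_y)$ only at the single point $p$.

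By the Jordan curve theorem applied in $D^2$, the complement $D^2\setminus(\gamma_y\cup\{p\})$ has exactly two components; intersecting with $L_y$ produces the two components $U$ and $V$ of $L_y\setminus\gamma_y$. Since $\partial D^2\setminus\{p\}$ is a connected arc, it lies entirely in the closure of one of the two components, say $V$. The other component $U$ then satisfies $\overline{U}\cap S^1(L_y)=\{p\}$, so the ideal boundary of $U$ is a single point.

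Now I pick a nearby flow line in $P_y$ lying on the $U$ side of $\gamma_y$. Recall that in the coordinates on $\U$ the strip $P_y$ is parametrized by $(r,t)\in[-1,1]\times[0,\infty)$ with $\gamma_y$ corresponding to $\{r=r_y\}$; perturbing $r$ in each of the two directions yields flow lines on opposite sides of $\gamma_y$ in $L_y$, and one of these sides is $U$ (if $y$ happens to lie at an extreme of the $r$-range, I first enlarge the initial rectangle $U$ slightly so that $y$ becomes interior). Choose such $z\in P_y$ with $\gamma_z\subset U$. Then the forward ray of $\gamma_z$ is a quasigeodesic in $L_y$ entirely contained in $U$, so $\eta^{+}(z)\in\overline{U}\cap S^1(L_y)=\{p\}$; thus $\eta^{+}(z)=p=\eta^{+}(y)$, directly contradicting Proposition \ref{nf}.

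The only non-routine ingredient is the topological observation that one of the two components cut off by the Jordan curve $\gamma_y\cup\{p\}$ has ideal boundary equal to $\{p\}$; once this is in place, Proposition \ref{nf} delivers the contradiction immediately.
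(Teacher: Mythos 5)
Your proposal is correct and takes essentially the same route as the paper: identify the Jordan domain cut off by $\gamma_y\cup\{p\}$ whose ideal boundary is the single point $p$, choose a nearby flow line of $P_y$ trapped inside it, and derive a contradiction with Proposition \ref{nf}. The paper states this more tersely (simply saying $\gamma_y$ bounds a disk $\mathcal{D}$ meeting $S^1(L_y)$ only at $p$ and that any flow line inside $\mathcal{D}$ has both ideal points equal to $p$), whereas you spell out the Jordan-curve step and the boundary adjustment of the rectangle; the underlying argument is the same.
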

	\begin{proof} If $\eta^{+}(y)=\eta^{-}(y)$ then the flow line $\gamma_{y}$ bounds a disk $\mathcal{D}$ on $L_y\cup\SC(L_y)$ such that the closure
		of $\mathcal{D}$ in $L \cup S^1(L)$ intersects
		$S^1(L)$ only in $\eta^{+}(y)=\eta^{-}(y)$. For any $z$ in interior of $\mathcal{D}$, the flow line $\gamma_{z}$ is contained in $\mathcal{D}$, hence $\eta^{+}(z)=\eta^{-}(z)=\eta^{+}(y)=\eta^{-}(y)$. This contradicts 
		the proposition \ref{nf}, because if  $z$, $y\in\U$ and $\gamma_z \neq \gamma_y$,
		then $\eta^+(z) \neq \eta^+(y)$.
	\end{proof}
	
	We now extend the map $\eta^+$ to a map from $\MM$ to 
	$\SC(\MM)$.
	For each $x$ in $\MM$ then $\eta^+(x)$ is in $S^1(L_x) \subset
	\SC(\MM)$.

	\begin{proposition}\label{Cont}
		$\eta^{+}$ and $\eta^{-}$ are continuous on $\MM$. 
	\end{proposition}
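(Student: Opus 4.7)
The plan is to prove continuity of $\eta^+$ at an arbitrary point $x \in \MM$; continuity of $\eta^-$ follows by applying the same argument to the reversed flow with $\widetilde{\A}$ and $\widetilde{\R}$ swapped. Work with a Candel metric on $M$ so that each leaf of $\FF$ is a hyperbolic surface of constant curvature $-1$, with the hyperbolic structures varying continuously transversely.

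Given $x_n \to x$ in $\MM$, the three key inputs are: (i) the previous lemma, which makes each forward flow ray into a uniform $(K_0,s_0)$-quasigeodesic in its $\FF$-leaf after taking a subray; (ii) continuity of the flow $\wwp$, which gives $\wwp_{[0,T]}(x_n) \to \wwp_{[0,T]}(x)$ uniformly for every fixed $T$; and (iii) the transversal description of the topology on $\SC(\MM)$ from \cite{Fen02, Cal00}, under which a small transversal $\tau$ to $\FF$ through $x$ yields a cylinder $\mathcal{P}_\tau \cong T^1(\tau)$ containing $\eta^+(x)$ and $\eta^+(x_n)$ for all large $n$.

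To make these ingredients interact, I would use the construction of Section \ref{s.foliation}. First, for $x \notin \widetilde{\R}$, replace $x, x_n$ by $\wwp_T(x), \wwp_T(x_n)$ for large $T$, using $\eta^+(z) = \eta^+(\wwp_T(z))$ and property \ref{conv1} to reduce to the case that $x$ is close to $\widetilde{\A}$. Fix $y \in \widetilde{\A}$ close to $x$ and the neighborhood $\U = \wwp_{[0,\infty)}(U)$, together with its strong stable projection $\Pi : \U \to P_y \subset L_y$ into the weak unstable $\wwp$-leaf $L_y$. For large $n$ we have $x_n \in \U$ and $\Pi(x_n) \to \Pi(x)$ in $L_y$; classical continuity of forward ideal points on weak unstable leaves of Anosov flows (property \ref{2.4} and \cite{Fen94}) then gives $\eta^+(\Pi(x_n)) \to \eta^+(\Pi(x))$ in $S^1(L_y)$. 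The remaining case $x \in \widetilde{\R}$ is handled directly in the weak stable leaf $L_x$, where all forward rays share the funnel point, combined with the previous argument applied to nearby non-repeller points $x_n$.

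The main technical hurdle is transferring the convergence $\eta^+(\Pi(x_n)) \to \eta^+(\Pi(x))$ in $S^1(L_y)$ (a fixed leaf) to the desired convergence $\eta^+(x_n) \to \eta^+(x)$ in the cylinder $\mathcal{P}_\tau$ across the varying leaves $L_{x_n}$. For each $z \in \U$, the rays $\wwp_{[0,\infty)}(z)$ and $\wwp_{[0,\infty)}(\Pi(z))$ are asymptotic in $\MM$ because they share a weak stable $\wwp$-leaf, and each is a uniform quasigeodesic in its own $\FF$-leaf. What must be shown, against the transversal description of the topology, is that uniformly close quasigeodesic rays in nearby leaves $L_{x_n}$, with continuously varying Candel metrics, determine points of $\mathcal{P}_\tau$ that converge in the cylinder topology. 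This reduces to the stability of ideal points of uniform quasigeodesics under bounded leafwise perturbation in a continuously varying family of hyperbolic surfaces, and it is where the argument is most delicate.
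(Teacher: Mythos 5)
Your decomposition into the two cases $x\notin\widetilde{\R}$ and $x\in\widetilde{\R}$ matches the paper, as does working with a Candel metric and using the neighborhood $\U$ and the projection $\Pi$. But there are two genuine gaps, one of which you yourself flag and one of which you slide past.

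For $x\notin\widetilde{\R}$, you propose to push the convergence into the fixed leaf $L_y$ via $\Pi$, get $\eta^+(\Pi(x_n))\to\eta^+(\Pi(x))$ in $S^1(L_y)$, and then ``transfer'' this back to the varying leaves $L_{x_n}$. You correctly identify this transfer as the delicate step, but you do not resolve it, and it is precisely what needs an argument: one must show that ideal points of uniformly quasigeodesic rays vary continuously as one moves transversely through a continuously varying family of hyperbolic leaves. The paper avoids this indirect route entirely. It works directly in the leaves $L_{x_n}$: it takes the geodesic ray $\zeta_n$ in $L_{x_n}$ from $x_n$ to $\eta^+(x_n)$, notes that $\gamma^+_n\subset\mathcal{N}_d(\zeta_n)$ for a uniform $d$ coming from the uniform quasigeodesic constants on $\U$ (which do come from the asymptotics with rays in $L_y$, so this is where the projection idea enters), and passes to a compact-open limit of the $\zeta_n$. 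The limit $\zeta'$ stays within Hausdorff distance $d$ of $\gamma^+_0$ in $L_{x_0}$, hence within bounded distance of $\zeta_0$, and two geodesics in a hyperbolic plane sharing a starting point and staying boundedly close must coincide. This sidesteps the transfer problem by never leaving the leaves $L_{x_n}$.

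For $x\in\widetilde{\R}$, your proposal is too thin to be an argument. The issue is that as $x_n\to x$ with $x_n\notin\widetilde{\R}$, the leaves $L_{x_n}$ are non-funnel while $L_x$ is a funnel weak stable leaf; there is no immediate reason the non-funnel forward ideal points $\eta^+(x_n)$ should limit to the unique funnel point of $L_x$ rather than to some backward ideal point of $L_x$. The paper's key ingredient here, which you omit, is the observation that around any backward ideal point $q=\eta^-(z)$ with $z\in\widetilde{\R}$ there is a neighborhood in $\SC(\MM)$ consisting entirely of backward ideal points and containing no forward ideal points (Observation \ref{obs}). If some subsequence $\eta^+(x_{i(k)})$ converged to such a $q$, it would eventually enter this neighborhood, a contradiction since the $\eta^+(x_{i(k)})$ are forward ideal points. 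Without this local separation of forward and backward ideal points near the repeller, the case $x\in\widetilde{\R}$ is not handled.
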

	\begin{proof}
		In this proof we again use a Candel metric in $M$.

		Suppose $x_i\rightarrow x_0$ in $\MM$. We will show that $\eta^{+}(x_i)\rightarrow \eta^{+}(x_0)$ in $\SC(\MM)$.
		There are two different cases depending on whether $x_0\in\widetilde{\R}$ or $x_0\notin\widetilde{\R}$.
		
		We first prove the result for $x_0\notin\widetilde{\R}$. As $x_0\notin\widetilde{\R}$ the forward ray starting at $x_0$ is asymptotic to a forward flow ray in $\widetilde{\A}$. Therefore it is enough to assume that $\{x_i\}$ and $x_0$ belong to a neighborhood $\U$ as constructed 
		above,  since this is true for every ray asymptoptic to  $\widetilde{\A}$.
		
		For $z$ in $\M$ let $L_z$ be the leaf of $\FF$ containing $z$.
		
		For for $i\in\NN\cup \{0\}$, let
		$\gamma^{+}_i$ denote the forward flow
		ray starting from $x_i$ and let $\zeta_{i}$ 
		denote the geodesic ray on $L_{x_i}$ 
		starting at $x_i$ and with ideal point $\eta^{+}(x_i)$ in $S^1(L_{x_i})$.
		Each $\zeta_i$ defines the ideal point $\eta^{+}(x_i)$ on $S^1(L_{x_i})$, therefore it is enough to show that any convergent subsequence of $(\zeta_{i})$ converges to $\zeta_0$ in the compact open topology. 
		Since $x_i$ are in a compact subset of $\MM$, 
		existence of of convergent subsequences of $\{\zeta_i\}$ is assured.
		
		Suppose that a subsequence
		$(\zeta_{i(k)})$ converges to $\zeta'$. We have to prove that
		$\zeta'=\zeta_0$. 
		We assume that the neighborhood $\U$ constructed above has a
		point $x \in \widetilde \A$ as in the construction 
		of $\U$. Then all flow rays in $L_x \cap \U$
		are $(K,s)$-quasigeodesics in $L_x$ for some fixed $K, s$.
		Since all flow rays in $\U$ are forward asymptotic to flow rays
		in $L_x$ there are $K',s'$ so that all flow rays in $\U$ are
		$(K',s')$-quasigeodesics in their respective $\FF$ leaves.
		It follows that there exists a uniform $d > 0$ such that 
		
		$$\gamma^+_{i(k)} \ \subset \ \mathcal{N}_d(\zeta_{i(k)}) \ \ \ {\rm and}
		\ \ \  \gamma^{+}_{0} \ \subset \ \mathcal{N}_d(\zeta_0),$$
		
		\noindent
		where $\mathcal{N}_d$ denotes the neighborhood of radius $d$ in
		the respective leaf of $\FF$.
		For any $d_1 > 0$, the segment of length $d_1$ on $\gamma^{+}_{i(k)}$ starting at $x_{i(k)}$ is within $d$-distance from $\zeta_{i(k)}$.  Therefore in the limit the segment of $\gamma^+_0$ of length $d_1$ starting from $x_0$ is contained in $d$ distant neighborhood from $\zeta'$ in the respective leaf.
		This is true for all $d_1$, so $\zeta'$ is at Hausdorff distance $d$ from $\gamma^{+}_0$ on $L_{x_0}$. But $\gamma^{+}_0$ is also at bounded distance from $\zeta_0$ on $L_{x_0}$, therefore $\zeta'$ and $\zeta_0$ are at a finite Hausdorff distance from each other on $L_{x_0}$. Hence $\zeta'=\zeta_0$, because
		they have the same starting point.
		As this is true for all convergent subsequences of 
		$(\zeta_i)$, we get our result for $x_0$ not in $\widetilde{\R}$. 
		
		Before dealing with the remaining case let us note the following:
		
		\begin{observation}\label{obs}
			By the construction of $\U$ starting with $x$ in $\widetilde{\A}$,
			and continuity of $\eta^{+}$ near $\widetilde{\A}$ we observe that the set $\U\cup\{\eta^{+}(z)|z\in\U\}$ is homeomorphic to $[0,1]\times[0,1]\times[0,1]$ inside
			$\mathcal{W}=\bigcup\limits_{y\in\lambda}(L_y\cup S^1(L_y))$, which is homeomorphic to a compact solid cylinder $[0,1]\times\{\ the\  unit\  disc\   \mathbb{D}\}$. 
			
			\noindent
			The set $\U\cup\{\eta^{+}(z)|z\in\U\}$ above 
			is saturated by forward flow lines and all the ideal points contained in this neighborhood are defined by forward flow rays. This is true for example
			for any forward ideal point $p$ on any $S^1(L)$ where $L$ is a leaf
			of $\FF$ in $\widetilde{\A}$. 
			Here $\lambda$ is a transversal to $\FF$ intersecting exactly the leaves
			of $\FF$ which intersect $\U$.
			This in particular implies that for any $y$ in $\lambda$ and
			ideal point $v$ in $S^1(L_y)$ which is a forward ideal point
			of $\wwp_{\RR}(z)$ with $z$ in $L_y \cap \U$ and in the interior
			of $\U$ then $\eta^+(z)$ is an interior point of the interval $I_y$
			of $S^1(L_y)$ associated to all forward ideal points of flow
			lines in $L_y \cap \U$. In particular we stress the
			important fact that any ideal point in
			this interval  $I_y$ in $S^1(L_y)$ is an ideal point of a forward flow
			ray, but it is not an ideal point of a backwards
			flow ray.
			
			In an analogous way the corresponding
			property is true for any backward ideal point $q$ on any $S^1(E)$ where $E\subset \widetilde{\R}$, but the difference is that the neighborhood around a backward flow ray defining $q$ is saturated by backward flowrays and all the ideal points in that neighborhood are defined by backward flowrays and
			no such ideal point is an ideal point of a forward flow ray.  
		\end{observation}
		
		To continue the proof of Proposition \ref{Cont} we next 
		assume that $x_0\in \widetilde{\R}$. Suppose that a subsequence
		$(\eta^{+}(x_{i(k)}))$ converges to $q$ where $q$ is not $\eta^+(x_0)$.
		As $x_0$ is in $\widetilde{\R}$, then $L_{x_0}$ is a leaf of the weak stable foliation $\FF^{ws}$. Hence by property \ref{2.4} on $L_{x_0}$ all the forward flow rays converge to a single ideal point in $S^1(L_{x_0})$ and all the other ideal points in $S^1(L_{x_0})$
		are ideal points of backward flow rays.
		As $q\neq\eta^{+}(x_0)$, $q$ is defined by a backward ray, that
		is $q = \eta^-(z)$ for some $z$ in $L_{x_0}$.
		By Observation \ref{obs} starting with $z$ in $\widetilde{\R}$
		(notice that $z$ is in the repeller, not the attractor),
		there exits a neighborhood $\V$
		saturated by backward flow rays around $z$ in 
		$\bigcup\limits_{y\in\lambda'}(L_y\cup S^1(L_y))$ for some transversal $\lambda'$. 
		By Observation \ref{obs} all limit points 
		are backward ideal points in $\V$ and no limit point is a forward ideal point.
		This contradicts the fact that the forward rays $\gamma^{+}_{i(k)}$ have
		ideal points in these intervals of ideal points for $k$ big enough
		by construction. 
		This contradiction shows that a subsequence 
		$(\eta^+(x_{i(k)}))$ 
		converging to $q \neq \eta^+(x_0)$ is not possible, hence $q=\eta^+(x_0)$.
		
		Hence $\eta^{+}$ is continuous on $\MM$. If we consider the flow $\Psi_t=\Phi_{-t}$ then backward ideal points of $\Phi_t$ are forward ideal points of $\Psi_{-t}$ and the continuity of $\eta^{-}$ follows. This completes the proof of Proposition \ref{Cont}.
	\end{proof}
	
	In the next lemma we combine all the above results and describe a key property that will be used to show that all the flow lines are quasigeodesic on their respective leaves of $\FF$. 
	
	Again we use a Candel metric.
	As before, given  $x$ in $\MM$, let $\gamma_x$ be the $\wwp$ flow line
	through it. In addition if $L_x$ is the leaf of $\FF$ containing $x$, 
	let $g_x$ be the geodesic in $L$ with ideal points $\eta^+(x), \eta^-(x)$.
	Notice that we already proved that $\eta^+(x), \eta^-(x)$ exist
	and are distinct from each other.
	This follows from Corollary \ref{c.different}.
	
	\begin{lemma}\label{geodesic nbd}
		There exists $d>0$ such that for all $x\in\MM$ we have that
		$$\gamma_x \ \ \subset \ \ \mathcal{N}_{d}(g_x),$$
		
		\noindent
		where $g_x$ is the geodesic on $L_x$ connecting $\eta^{+}(x)$ and $\eta^{-}(x)$ and $\mathcal{N}_{d}(g_x)$ is the $d$-neighborhood of $g_x$ on $L_x$.  
	\end{lemma}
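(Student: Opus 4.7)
The plan is to combine the Morse Lemma (stability of quasigeodesics in Gromov hyperbolic spaces) with a compactness argument on $M$, using the continuity of $\eta^{\pm}$ established in Proposition \ref{Cont}.

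First I equip each leaf of $\FF$ with the Candel metric, so that every leaf is isometric to $\HH^{2}$. The previous lemma, together with the remark following it, produces uniform quasigeodesic constants $(K_0,s_0)$ for every forward and every backward flow ray. The Morse Lemma then provides $D_0=D_0(K_0,s_0)$ such that for each $x\in\MM$ the forward ray $\wwp_{\zein}(x)$ lies in $\mathcal{N}_{D_0}(\zeta_x^{+})$, where $\zeta_x^{+}$ is the geodesic ray in $L_x$ from $x$ to $\eta^{+}(x)$; similarly the backward ray lies in $\mathcal{N}_{D_0}(\zeta_x^{-})$. It therefore suffices to bound the Hausdorff distance between $\zeta_x^{-}\cup\zeta_x^{+}$ and $g_x$ uniformly in $x$.

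The key step is to prove that the function
\[
\phi:\MM\rightarrow [0,\infty),\qquad \phi(x)=d_{L_x}(x,g_x),
\]
is uniformly bounded. The function $\phi$ is $\pi_1(M)$-invariant, since deck transformations act by isometries on the leaves with the Candel metric and intertwine $x$ with $\eta^{\pm}(x)$. To see that $\phi$ is continuous on $\MM$, suppose $x_n\rightarrow x$ in $\MM$; then by Proposition \ref{Cont} one has $\eta^{\pm}(x_n)\rightarrow\eta^{\pm}(x)$ in $\SC(\MM)$, and since $\eta^{+}(x)\neq\eta^{-}(x)$ by Corollary \ref{c.different}, the leafwise geodesic $g_{x_n}$ converges to $g_x$ uniformly on compact subsets in the Candel metric. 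Hence $d_{L_{x_n}}(x_n,g_{x_n})\rightarrow d_{L_x}(x,g_x)$. So $\phi$ descends to a continuous function on the compact manifold $M$ and is bounded by some $d_1$.

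Once $\phi\leq d_1$ is in hand, a standard estimate in $\HH^{2}$ (two geodesic rays sharing an ideal point fellow-travel exponentially, and the triangle inequality controls the remaining fellow-travel constant by $d(x,g_x)$) shows that $\zeta_x^{-}\cup\zeta_x^{+}\subset \mathcal{N}_{2d_1}(g_x)$ for every $x$. Combining with the Morse estimate yields
\[
\gamma_x \ \subset \ \mathcal{N}_{D_0}(\zeta_x^{-}\cup\zeta_x^{+}) \ \subset \ \mathcal{N}_{D_0+2d_1}(g_x),
\]
so the lemma follows with $d:=D_0+2d_1$. The main obstacle is the uniform bound on $\phi$: non-degeneration of the pair $(\eta^{-}(x),\eta^{+}(x))$ is only pointwise from Corollary \ref{c.different}, and one must upgrade this to a uniform bound via the descent to compact $M$. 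The remaining hyperbolic geometry estimates are classical.
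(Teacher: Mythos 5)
Your proof is correct, and the central mechanism (continuity of $\eta^{\pm}$ from Proposition \ref{Cont} plus $\pi_1(M)$-invariance and compactness of $M$ to get a uniform bound on $\phi(x)=d_{L_x}(x,g_x)$) is exactly the paper's argument, just phrased as ``continuous function on a compact quotient'' rather than the paper's contradiction-by-sequences formulation.

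However, the Morse Lemma and the hyperbolic-geometry estimate relating $\zeta_x^-\cup\zeta_x^+$ to $g_x$ are an unnecessary detour. Once you know $\phi(y)\leq d_1$ for \emph{every} $y\in\MM$, the lemma follows immediately: for any point $p$ on $\gamma_x$ one has $\gamma_p=\gamma_x$, hence $L_p=L_x$, $\eta^{\pm}(p)=\eta^{\pm}(x)$, and therefore $g_p=g_x$; so $d_{L_x}(p,g_x)=\phi(p)\leq d_1$ and one may simply take $d=d_1$. In other words, the quantity you bound is already parametrised over all points of all flow lines, not just over one marked point per flow line, so the ray-by-ray fellow-travelling step is redundant. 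The paper exploits this and goes directly from the boundedness of $\phi$ to the conclusion; your route reaches the same constant up to an additive $D_0$ that does no work. Everything else (invariance under deck transformations, continuity of $\phi$ via the compact-open convergence of $g_{x_n}\to g_x$ once $\eta^+(x)\neq\eta^-(x)$ is known) matches the paper.
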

	
	\begin{proof}Suppose that there does not exists any such $d$. Then there exists a sequence $(x_i)$ in $\MM$ with
		$x_i$ in leaves $L_{x_i}$ of $\FF$ such that $d_{L_{x_i}}(x_i, g_{x_i})>i$. Up to deck transformations there exists a convergent subsequence of $(x_i)$ which we assume is the original
		sequence, and we assume $x_i\rightarrow x$. By lemma $\ref{Cont}$ we know that 
		$$\eta^{+}(x_i)\ \rightarrow \ \eta^{+}(x) \ \ \ {\rm and} \ \ \ 
		\eta^{-}(x_i) \ \rightarrow \ \eta^{-}(x).$$ 
		\noindent
		Since $x_i$ converges to $x$ we assume that all $x_i$ are in leaves
		of $\FF$ which intersect a fixed transveral $\lambda$ to $\FF$.
		
		Since $\eta^+(x_i)$ converges to $\eta^+(x)$, \ \ $\eta^-(x_i)$
		converges to $\eta^-(x)$ and $\eta^+(x) \neq \eta^-(x)$ it
		follows that 
		$\{g_{x_i}\}$ converges to $g_x$. 
		This uses that the  
		topology defined on $\bigcup\limits_{y\in\lambda}(S^1(L_y))$ is
		given by the trivialization of the unit tangent bundle to $\FF$
		along $\lambda$.
		By convergence we mean convergence in the compact open topology.
		But this contradicts that $d_{L_{x_i}}(x_i,g_{x_i})$ converges
		to infinity, since $d_{L_x}(x,g_x)$ is finite and the sequence
		converges to it. 
		This finishes the proof.
	\end{proof}

	\section{Flow lines Are Leafwise Quasigeodesic}

	We first prove a weak quasigeodesic property of the flow lines on the leaves of $\FF$ containing them. 
	
	\begin{proposition}\label{wqg}
		For all $b>0$ there exists a constant $c_b>0$ depending on $b$ such that if $\gamma$ is a flow segment connecting $x$ and $y$ with $length(\gamma)>c_b$ then $d_{L_x}(x,y)>b$, where $L_x$ is the leaf of $\FF$ which contains $x$.
	\end{proposition}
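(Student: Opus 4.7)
My plan is a proof by contradiction. Suppose there is $b > 0$ and flow segments $\gamma_n = \widetilde{\Phi}_{[0, t_n]}(x_n)$ from $x_n$ to $y_n = \widetilde{\Phi}_{t_n}(x_n)$ with $t_n \to \infty$ (WLOG positive; otherwise run the reverse flow) while $d_{L_{x_n}}(x_n, y_n) \leq b$ for all $n$. By compactness of $M$, apply deck transformations so that $x_n \to x_0 \in \MM$. Since leaves of $\FF$ are properly embedded (Lemma \ref{lem.emb}) and $y_n$ lies at bounded leafwise distance from $x_n$, the $y_n$ remain in a compact region of $\MM$; after passing to a subsequence, $y_n \to y_0$. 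Applying Arzelà–Ascoli to unit-speed paths of length at most $b$ in $L_{x_n}$ joining $x_n$ to $y_n$ and using continuity of the foliation, the limit is a path in a single leaf, so $L_{x_0} = L_{y_0}$ and $d_{L_{x_0}}(x_0, y_0) \leq b$.

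The next step is to locate $x_0$ in $\widetilde{\A} \cup \widetilde{\R}$. Every forward orbit of $\Phi$ either stays in $\R$ or enters any fixed neighborhood of $\A$ in bounded time (uniformly on compact subsets away from $\R$), so since $t_n \to \infty$ the points $\pi(y_n) = \Phi_{t_n}(\pi(x_n))$ accumulate on $\Omega = \A \cup \R$; hence $\pi(y_0) \in \Omega$. Because each leaf of $\FF$ is entirely contained in $\widetilde{\A}$, entirely in $\widetilde{\R}$, or disjoint from both, the equality $L_{x_0} = L_{y_0}$ puts $x_0, y_0$ in the same component of $\widetilde{\A} \cup \widetilde{\R}$. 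I treat the attractor case (the repeller case is symmetric): $L_{x_0}$ is a weak unstable leaf. By continuity of $\eta^{\pm}$ (Proposition \ref{Cont}) and the fact that $x_n, y_n$ lie on a common flow line, $\eta^{\pm}(x_0) = \eta^{\pm}(y_0)$; Property \ref{2.4} then forces $\gamma_{x_0} = \gamma_{y_0}$, i.e.\ $y_0 = \widetilde{\Phi}_s(x_0)$ for some finite $s$.

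To finish, I would build the $\U$-neighborhood of a forward ray of $x_0$ as in Section 5 (valid since $x_0 \in \widetilde{\A}$). For $n$ large, $x_n \in \U$, and the forward ray from $x_n$ is strong-stable asymptotic to the forward ray from $\Pi(x_n) \in L_{x_0}$, which is uniformly quasigeodesic in the weak unstable leaf by Lemma \ref{lqg}. The comparison argument used in the proof that flow rays are leafwise quasigeodesic then yields uniform constants $K_0, s_0$ with $d_{L_{x_n}}(x_n, \widetilde{\Phi}_t(x_n)) \geq t/K_0 - s_0$ for all $t \geq 0$; setting $t = t_n$ gives $b \geq t_n/K_0 - s_0 \to \infty$, a contradiction. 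The principal obstacle is the reduction to $x_0, y_0 \in \widetilde{\A} \cup \widetilde{\R}$: it is essential because, without it, the starting point $x_n$ of $\gamma_n$ need not lie in a region where uniform quasigeodesic constants are available, and a priori only subrays of flow lines are known to be uniformly quasigeodesic.
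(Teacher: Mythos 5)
Your route is genuinely different from the paper's. The paper, after extracting convergent $x_i \to x_0$, $y_i \to y_0$ in a common leaf $L_0$, proves two purely topological claims: that $x_0, y_0$ cannot lie on a common flow line (a flow-box/product-structure argument), and that they cannot be joined by an arc in $L_0$ transverse to the flow (a Poincar\'e--Hopf argument applied to the disk bounded by $\sigma_i \cup \gamma_i$); combined with Lemma \ref{hausdorff} (the flow-line leaf space in $L_0$ is $\RR$, so two distinct flow lines are always joined by a transversal) this is a contradiction. No reduction to $\widetilde{\A}\cup\widetilde{\R}$ is made, and no asymptotic or quasigeodesic estimates are used. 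You instead push $x_0$ and $y_0$ into $\widetilde{\A}\cup\widetilde{\R}$ using the dynamics near $\Omega$, and then invoke ideal-point rigidity (Property \ref{2.4}) and uniform quasigeodesic constants in the $\U$-neighborhoods. This trades Poincar\'e--Hopf for a more delicate dynamical reduction.

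Two comments on your draft. First, your step establishing $\gamma_{x_0}=\gamma_{y_0}$ already finishes the argument and makes the final $\U$-comparison step unnecessary: once $y_0 = \widetilde{\Phi}_s(x_0)$ for a finite $s$, take a compact flow box $\mathcal{N}$ around the orbit segment $\widetilde{\Phi}_{[0,s]}(x_0)$ (each $\widetilde{\Phi}$-orbit meets $\mathcal{N}$ in a single interval); for large $n$ both $x_n, y_n \in \mathcal{N}$, hence $\gamma_n \subset \mathcal{N}$, so $\operatorname{length}(\gamma_n)$ is bounded, contradicting $\operatorname{length}(\gamma_n)\to\infty$. This is precisely the paper's Claim 2 argument, turned around: you show the two limits \emph{are} on the same flow line and then derive absurdity, whereas the paper shows they \emph{cannot} be. Second, the uniformity claim you lean on in the last step, namely that $d_{L_{x_n}}(x_n, \widetilde{\Phi}_t(x_n)) \geq t/K_0 - s_0$ for \emph{all} $t\geq 0$ with constants uniform over $\U$, is stronger than what the paper states: the paper's lemma is followed only by the assertion that a \emph{subray} of every flow ray is a $(K_0, s_0)$-quasigeodesic, and the lemma's own proof first replaces the ray by a subray with initial point near $\widetilde{\A}$. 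Because your $x_n$ converge to a point of $\widetilde{\A}$, the discarded initial piece has length bounded independently of $n$, so the gap is fillable; but as written the inequality overstates what has been established and should be justified (or, better, replaced by the flow-box shortcut above). Finally, your reduction that places $x_0$ (or $y_0$) in $\widetilde{\A}\cup\widetilde{\R}$ needs the ``uniformly on compact sets away from $\R$'' caveat made precise: orbits starting arbitrarily close to $\R$ have escape times that are unbounded, so one must argue by cases on whether $\pi(x_0)\in\R$ before concluding $\pi(y_0)\in\A$.
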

	
	\begin{proof} Fix $b>0$. We do the proof by contradiction. Suppose the statement is not true for some $b>0$. Then for all $i\in\NN$ there exists two points $x_i$ and $y_i$ in leaves $L_i$ of $\FF$, with 
		$x_i, y_i$ in the same flow line defining a flow line 
		segment $\gamma_i$ satisfying
		$length(\gamma_i)>i$ but $d_{L_i}(x_i, y_i)<b$.
		Up to deck transformations and a subsequence, we assume that 
		$(x_i)$ is convergent and $x_{i}\rightarrow x_0$. Since
		$d_{L_i}(x_i,y_i) < b$ we can similarly
		assume that $( y_i )$ is convergent and let
		$y_i\rightarrow y_0$. 
		
		\begin{claim 1}
			\textit{$x_0$ and $y_0$ are on the same leaf $L_0$ of $\FF$. }
		\end{claim 1}
		\noindent
		\begin{proof}
			If we consider a compact ball $B_{x_0}$ on $L_0$ containing
			$x_0$  and a product neighborhood of $N(B_{x_0})$ of $\FF$, then for all large $i$, $L_i$ intersects $N(B_{x_0})$ 
			and $x_i\in L_i\cap N(B_{x_0})$. If we consider $B_{x_0}$ sufficiently large, the assumption $d_{L_i}(x_i,y_i)<b$ for all $i$ 
			forces that $y_i$ has to be contained in $N(B_{x_0})$. Hence by the product structure on $N(B_{x_0})$,  $y_0$ also has to be on $L_0$ as $y_i\rightarrow y_0$. 
		\end{proof}
		\begin{claim 2}
			$x_0$ and $y_0$ cannot be on the 
			same flow line in $L_0$.
		\end{claim 2}
		\noindent
		\begin{proof} If not, then there exists a flow line segment $\gamma$ connecting $x_0$ and $y_0$ and consider a compact neighborhood $\mathcal{N}$ around $\gamma$ which has a product structure with respect to the flow lines. 
			This is the crucial fact. As $x_i\rightarrow x_0$ and $y_i\rightarrow y_0$,  the flow segments $\gamma_i$ are contained in $\mathcal{N}$ for all large $i$. By continuity of length of flow lines,
			$length(\gamma_i)\rightarrow length(\gamma)$. But that is not possible as $length(\gamma_i)\rightarrow \infty$ and $\gamma$ is compact, a contradiction. 
		\end{proof}
		\begin{claim 3}
			$x_0$ and $y_0$ can not be connected by a curve on $L_0$ everywhere transversal to the flow lines in $L_0$. 
		\end{claim 3}
		
		\noindent
		\begin{proof}
			
			Suppose that there exists a line segment $\sigma$ on $L_0$ everywhere transversal to the flow lines on $L_0$ and connecting $x_0$ and $y_0$. By the local product structure of $\FF$ near $\sigma\in L_0$, there should be a segment $\sigma_i$ in $L_i$ connecting $x_i$ and $y_i$ and everywhere transversal to flow lines on $L_i$.
			Up to taking a sub-segment of $\gamma_i$ if necessary and then
			a sub-segment of $\sigma_i$ we may assume that  $\gamma_i$ does not
			intersect the interior of $\sigma_i$. It follows that the union  of
			$\sigma_i$ and $\gamma_i$ bounds a disk $\mathcal{D}_i$ on $L_i$ as their end points are same. All the flow lines which enter  $\mathcal{D}_i$  transversally intersecting $\sigma_i$ have to exit $\mathcal{D}_i$ transversally intersecting $\sigma_i$. Poincar\'{e}-Hopf theorem says that there exists at least one flow line tangent to $\sigma_i$, a contradiction. 
		\end{proof}
		
		By Lemma \ref{hausdorff} the leaf space of the flow foliation in $L_0$
		is homeomorphic to the reals. Hence any two distinct flow lines
		in $L_0$ are connected by a transversal.
		
		This contradiction proves
		Proposition $5.1$. 
	\end{proof}
	
	Now we are ready to prove our final claim:
	\begin{theorem}\label{qg}
		The flow lines are uniformly quasigeodesics in their respective leaves
		of $\FF$.  
	\end{theorem}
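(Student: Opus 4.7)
My plan is to upgrade the ``weak'' quasigeodesic estimate of Proposition \ref{wqg} to a genuine linear lower bound by combining it with Lemma \ref{geodesic nbd} and the ray-quasigeodesic results from the previous section. The upper bound $d_{L_x}(\gamma_x(t_1), \gamma_x(t_2)) \leq C|t_1 - t_2|$ is immediate, as the compactness of $\M$ ensures that the flow $\wwp$ has uniformly bounded speed; so only the lower bound of the form $d_{L_x}(\gamma_x(t_1), \gamma_x(t_2)) \geq (t_2 - t_1)/K - s$ requires work.

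The central idea is to use Lemma \ref{geodesic nbd} to transfer the question onto the bi-infinite geodesic $g_x \subset L_x$ connecting $\eta^-(x)$ and $\eta^+(x)$. Let $p(t)$ denote a nearest-point projection of $\gamma_x(t)$ to $g_x$, so that $d_{L_x}(\gamma_x(t), p(t)) \leq d$. Then
$$d_{L_x}(\gamma_x(t_1), \gamma_x(t_2)) \geq d_{g_x}(p(t_1), p(t_2)) - 2d,$$
and it suffices to prove a linear lower bound on $d_{g_x}(p(t_1), p(t_2))$ in $t_2 - t_1$. The key input is the uniform ray-quasigeodesic property: from the ray lemma in the previous section (for rays asymptotic to $\widetilde{\A}$) and Lemma \ref{lqg} (for rays inside $\widetilde{\A}$ or $\widetilde{\R}$), combined with compactness of $\M$, there are constants $K_0, s_0$ such that every forward ray and every backward ray from any base point is a $(K_0, s_0)$-quasigeodesic in its leaf of $\FF$. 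Together with Proposition \ref{Cont}, this forces $p(t) \to \eta^+(x)$ as $t \to \infty$ and $p(t) \to \eta^-(x)$ as $t \to -\infty$, each at a uniform linear rate: explicitly $d_{g_x}(p(0), p(t)) \geq |t|/K_0 - s_0 - 2d$ once $|t|$ is large enough.

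I would finish by casework on the signs of $t_1, t_2$. The same-sign cases reduce immediately to the forward (or backward) ray estimate applied to the ray based at $\gamma_x(t_1)$. The delicate case is $t_1 < 0 < t_2$: once $|t_1|$ and $t_2$ exceed a uniform threshold $T_0$, the projections $p(t_1)$ and $p(t_2)$ land on opposite sides of $p(0)$ along $g_x$, so their $g_x$-distances add to give $d_{g_x}(p(t_1), p(t_2)) \geq (t_2 - t_1)/K_0 - 2 s_0 - 4d$; in the mixed regime with one of $|t_1|, t_2$ below $T_0$, a triangle-inequality argument using the uniform ray estimate and the bounded-speed upper bound absorbs the defect into the additive constant. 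The main obstacle will be ensuring global uniformity of all constants across $x \in \MM$: in particular making the ray quasigeodesic constants uniform even for base points near $\widetilde{\R}$, where the ``subray'' version of the ray lemma has a large initial delay, and carefully justifying the opposite-sides assertion, which ultimately rests on the convergence $p(t) \to \eta^{\pm}(x)$ from Proposition \ref{Cont} together with the linear rate coming from the uniform ray-quasigeodesic estimate. Proposition \ref{wqg} is used to bridge the transitional regime where the ray estimate does not yet apply cleanly.
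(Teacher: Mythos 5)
Your proposal correctly identifies the two ingredients the paper uses, Lemma \ref{geodesic nbd} (the flow line stays in a $d$-neighborhood of $g_x$) and Proposition \ref{wqg} (the weak estimate), and correctly observes that the upper bound is trivial. But the route you take from there has a gap you flag yourself and do not resolve, and which the paper's argument is specifically designed to avoid.

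The gap is the uniform ray-quasigeodesic estimate. What the paper actually establishes (via compactness, just after the lemma on forward and backward rays) is: there exist $K_0, s_0$ so that every ray has \emph{some subray} which is a $(K_0,s_0)$-quasigeodesic in its leaf. The point at which that subray begins is not uniformly controlled; for a base point deep in the transition region or near $\widetilde{\R}$, the forward ray may spend an arbitrarily long initial flow time before it enters a set $\U$ around the attractor where the quasigeodesic estimate kicks in. Your argument needs $d_{g_x}(p(0),p(t)) \geq |t|/K_0 - s_0 - 2d$ ``once $|t|$ is large enough,'' with ``large enough'' uniform in $x$; that is precisely interpretation (b) of the ray estimate, and it is not what has been proved. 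You propose to bridge the transitional regime with Proposition \ref{wqg}, but \ref{wqg} only gives ``length $> c_b$ implies leafwise distance $> b$,'' which is a threshold statement with no rate; it does not bound the length of the transitional segment, nor does it give the linear growth you need to absorb an unbounded delay into an additive constant. In short, the linear rate you invoke is essentially equivalent to the theorem itself.

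The paper's proof sidesteps this entirely. Rather than trying to upgrade the ray estimate directly, it argues by contradiction: if some flow segment has length much larger than $2K$ times its leafwise endpoint distance, project the segment to $g_x$ (staying within $d$ by Lemma \ref{geodesic nbd}), chop the image into roughly $n_0$ unit intervals, pull back to a partition of the flow segment, and apply the pigeonhole principle: one subsegment must have flow length $> K$ while its endpoints project to $g_x$-points at distance $\leq 1$, hence have leafwise distance $< 1 + 2d$. Since $K$ is arbitrary, this violates Proposition \ref{wqg} with $b = 1 + 2d$. This pigeonhole step is exactly the mechanism that upgrades the qualitative weak bound to the quantitative linear bound without ever needing a uniform ray estimate or a casework on signs; it is the missing idea in your proposal. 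If you want to pursue your direct approach, you would first have to prove that the ``initial delay'' in the ray estimate is uniformly bounded (equivalently, that the transitional segment has uniformly bounded flow length), which is an additional lemma not present in the paper and not obviously easier than the theorem.
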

	
	\begin{proof}
		We prove the theorem by contradiction. 
		Again we use a Candel metric in $\F$.
		We assume that the geodesics are not uniform quasigeodesic on their leaves. From this assumption we  will construct sequence of pairs 
		$\{(x_i,y_i)\}$ such that $x_i$ and $y_i$ are connected by a flow segment $\gamma_i$ where $\text{ length}(\gamma_i)\rightarrow \infty$ but $d_{L_i}(x_i, y_i)$ is bounded. 
		Here $L_i$ is the $\FF$ leaf containing both $x_i, y_i$.
		But that will contradict the `weak quasigeodesic property' in proposition \ref{wqg}. A very similar result was proved in \cite{FM01}, we reconstruct the same arguments in our specific case.

		By our assumption that flow lines are not uniform quasigeodesics, we get that for any $K>0$, there exists a flow segment $\gamma_{[x,y]}$ on a leaf $L_x$ such that 
		$$\text{length}(\gamma_{[x,y]})/d_{L_{x}}(x,y)>2K\text{ and }\text{length}(\gamma_{[x,y]})>K$$
		
		\noindent
		Here the $x, y, L_x$ depend of the $K$, we omit the explicit dependence.
		Consider the geodesic  $g_x=g_y$ on $L_x$ with ideal points
		$$\eta^{+}(x)\ =\ \eta^{+}(y) \ \ \ {\rm and} \ \ \ 
		\eta^{-}(x)\ =\ \eta^{-}(y) \ \ \ {\rm on} \ \ \  S^1(L_x).$$
		\noindent
		By Lemma \ref{geodesic nbd}, there exists $d>0$ such that $\gamma\subset\mathcal{N}_{d}(g_x)$,
		where the neighborhood is in $L_x$. This $d$ is global.
		Let $\rho:L_{x}\rightarrow g_x$ be the `closest point map', which means $\rho(p)$ is the closest point on $g_x$ from $p\in L_{x}$. This
		is the orthogonal projection in $L_x$ to $g_x$.
		It follows that:
		$$d_{L_x}(\rho(x),\rho(y))\leq d_{L_{x}}(x,y)\leq d_{L_x}(\rho(x),\rho(y))+2d\text{  (*)}$$
		Let us assume that $d_{L_x}(x,y)>1+2d$. Hence $d_{L_{x}}(\rho(x),\rho(y))>  1$ by $(*)$ and 
		$$\frac{\text{ length}(\gamma_{[x,y]})}{d_{L_x}(\rho(x),\rho(y))}\geq\frac{\text{ length}(\gamma_{[x,y]})}{d_{L_x}(x,y)}\geq 2K>K+\frac{K}{d_{L_x}(\rho(x),\rho(y))} $$
		Therefore 
		$$\frac{\text{length}(\gamma_{[x,y]})}{K}>d_{L_x}(\rho(x),\rho(y))+1>\lceil d_{L_x}(\rho(x),\rho(y))\rceil$$
		
		\noindent
		where $\lceil a \rceil$ denotes the integer $n$ such that $n-1<a\leq n$. 
		
		Suppose $n_0=\lceil d_{L_x}(\rho(x),\rho(y))\rceil$, then $\text{length}(\gamma_{[x,y]})>n_0 K$.
		Also 
		$$n_0-1 \ < \ \lceil d_{L_x}(\rho(x),\rho(y))\rceil \ \leq \  n_0,$$
		
		\noindent
		hence we can construct a sequence $\{\rho(x)=z_0,z_1,...,z_n=\rho(y)\}$ of points in $g_x$,  such that $d_{L_x}(z_{i-1},z_i)=1$ for all $i<n_0$ and $d_{L_x}(z_{n_0-1},z_{n_0})\leq 1$. Next we consider the sequence $x=x_0,x_1,...,x_{n_0}$ where $x_i$ is the last point on $\gamma_{[x,y]}$ such that $\rho(x_i)=z_i$.
		
		If $\gamma_i$ denote the flow segment joining $x_{i-1}$ and $x_i$, we have $\gamma_{[x,y]}=\gamma_1 * \gamma_2 *...*\gamma_{n_0}$. 
		Hence 
		$$\sum_{n=1}^{n_0}\text{length}(\gamma_i)=\text{length}(\gamma_{[x,y]})>n_0K$$
		By the pigeonhole principle there exists $x_{i-1}$ and $x_{i}$ such that $\text{ length}(\gamma_{[x_{i-1},x_i]})>K$. But from $(*)$ we get that for all $i$, $$d_{L_x}(x_{i-1},x_{i})\leq d_{L_x}(\rho(x_{i-1},x_i))+2d=d_{L_x}(z_{i-1},z_i))+2d < 1+2d.$$ 
		
		As the choice of $K>0$ was arbitrary, this proves that the `weak quasigeodesic property' in lemma \ref{wqg} is not true for $b=1+2d$, a contradiction. We
		conclude that flow lines are uniformly quasigeodesic on their respective leaves of $\FF$.
		
		This finishes the proof of the theorem \ref{qg}.
	\end{proof}
	
	\noindent
	\textbf{Conclusion:} Section $4$ shows that every leaf in $\F$ is Gromov
	hyperbolic when lifted to the universal cover. Theorem \ref{qg} proves that 
	the flow foliation is a leafwise quasigeodesic subfoliation
	of $\FF$.
	Moreover Proposition \ref{nf} proves that all leaves of $\F$ which are not contained in $\A$ or $\R$ are \textit{non funnel}, whereas all leaves in $\A$ or $\R$ are $funnel$ by corollary \ref{f}. This completes the proof of the Main Theorem \ref{main}. $\square$
	

\begin{thebibliography}{99}
		
		\bibitem[Ano63]{Ano63}D. V. Anosov. \textit{Ergodic properties of geodesic flows on closed Riemannian manifolds of negative curvature}.Dokl. Akad. Nauk SSSR, 151:1250-1252, 1963.
		
		\bibitem[BFP20]{BFP20} Thomas Barthelme, S\'{e}rgio Fenley, and Rafael Potrie. \textit{Collapsed anosov flows and self orbit equivalences}. https://arxiv.org/abs/2008.0654, 08 2020.
		
		\bibitem[BBY17]{BBY17} F. B\'{e}guin, C. Bonatti, and B. Yu. \textit{Building Anosov flow on $3$-manifolds}, Geom. Topol. {\bf 21} (2017) 1837-1930.
		
		\bibitem[BI08]{BI08}   Dmitri Burago and Sergei Ivanov. \textit{Partially hyperbolic diffeomorphisms of 3-manifolds with abelian fundamental groups}.J. Mod. Dyn., 2(4):541-580, 2008.
		
		\bibitem[Bru93]{Bru93}  Marco Brunella. Separating the basic sets of a nontransitive Anosov flow.Bull. LondonMath. Soc., 25(5):487–490, 1993.
		
		\bibitem[Cal00]{Cal00}  Danny Calegari. The geometry of R-covered foliations.Geom. Topol., 4:457–515, 2000.
		
		\bibitem[Cal07]{Cal07}  Danny Calegari.\textit{Foliations and the geometry of 3-manifolds}. Oxford Mathematical Mono-graphs. Oxford University Press, Oxford, 2007.
		
		\bibitem[Can93]{Can93}  Alberto Candel.\textit{ Uniformization of surface laminations}. Ann. Sci. \'{E}cole Norm. Sup. (4),26(4):489-516, 1993.
		
		\bibitem[CC00]{CC00} Alberto Candel and Lawrence Conlon.\textit{Foliations. I}, volume 23, Graduate Studies Mathematics. American Mathematical Society, Providence, RI, 2000.
		
		\bibitem[Fen94]{Fen94} S\'{e}rgio R. Fenley. \textit{Anosov flows in 3-manifolds}. Ann. of Math. (2), 139(1):79-115, 1994.
		
		
		\bibitem[Fen02]{Fen02}  S\'{e}rgio R. Fenley. \textit{Foliations, topology and geometry of 3-manifolds:R-covered foliations and transverse pseudo-Anosov flows}. Comment. Math. Helv., 77(3):415–490, 2002.
		
		\bibitem[FM01]{FM01}  S\'{e}rgio Fenley and Lee Mosher. \textit{Quasigeodesic flows in hyperbolic 3-manifolds}.Topology,40(3):503-537, 2001.
		
		\bibitem[FW80]{FW80}  John Franks and Bob Williams. \textit{Anomalous Anosov flows}. Global theory of dynamical systems (Proc. Internat. Conf., Northwestern Univ., Evanston, Ill., 1979), volume 819 of Lecture Notes in Math., pages 158?174. Springer, Berlin, 1980.
		
		\bibitem[Gro87]{Gro87}  M. Gromov.\textit{ Hyperbolic groups}. Essays in group theory, volume 8 of Math. Sci. Res.Inst. Publ., pages 75-263. Springer, New York, 1987.
		
		\bibitem[IM90]{IM90} T. Inaba and S. Matsumoto. \textit{Nonsingular expansive flows on $3$-manifolds and foliations with circle prong singularities}, Japan J. Math. (N.S.) {\bf 16} (1990), no.2, 329-340.
		
		\bibitem[KH95]{KH95}  Anatole Katok and Boris Hasselblatt.\textit{Introduction to the modern theory of dynamical systems}, volume 54, Encyclopedia of Mathematics and its Applications. Cambridge University Press, Cambridge, 1995. With a supplementary chapter by Katok and Leonardo Mendoza.
		
		\bibitem[Pat93]{Pat93} Miguel Paternain. \textit{Expansive flows and the fundamental group}.Bol. Soc. Brasil. Mat.(N.S.), 24(2):179-199, 1993.
		
		\bibitem[Pla75]{Pla75}  J. F. Plante. \textit{Foliations with measure preserving holonomy}.Ann. of Math. (2), 102(2):327-361, 1975.
		
		\bibitem[Sha20]{Sha20} Mario Shannon.\textit{Dehn surgeries and smooth structures on 3-dimensional transitive Anosov flows}. PhD thesis, https://tel.archives-ouvertes.fr/tel-02951219/document, 09 2020.
		
		\bibitem[Sma67]{Sma67} S. Smale. \textit{Differentiable dynamical systems}.Bull. Amer. Math. Soc., 73:747-817, 1967.
		
		\bibitem[Thu82]{Thu82} William P. Thurston. \textit{Three-dimensional manifolds, Kleinian groups and hyperbolic geometry}.Bull. Amer. Math. Soc. (N.S.), 6(3):357-381, 1982.
		
		\bibitem[Thu97]{Thu97} William P. Thurston.\textit{Three dimensional geometry and topology}. Vol. 1, volume 35 of Princeton Mathematical Series. Princeton University Press
		
		
	\end{thebibliography}

\end{document}